\newtheorem{Th}{Theorem}[section]
\newtheorem{Cor}[Th]{Corollary}
\newtheorem{Lem}[Th]{Lemma}
\newtheorem{Prop}[Th]{Proposition}
\theoremstyle{definition}
\newtheorem{Def}[Th]{Definition}
\newtheorem{Rem}[Th]{Remark}
\title{\textbf{On Extended Family of Fractional Sobolev Spaces Via Ultradistributions of Slow Growth}}
\author{\textbf{$^1$Anslem Amaonyeiro,\quad $^{2*}$Egwe Murphy E.}\\
\small $^{1,2^*}$Department of Mathematics, University of Ibadan, P. M. B. 200132, Ibadan, Nigeria.\\
\small  anslemamaonyeiro@uam.edu.ng; $^*$murphy.egwe@ui.edu.ng, egwemurphy@gmail.com
       }
       \date{}
\begin{document}
\bibliographystyle{unsrt}
\addto\captionsenglish{
\renewcommand{\bibname}{\centering REFERENCES}}
\maketitle
\ \\
\begin{abstract}
This paper considers a new version of fractional Sobolev spaces $\widetilde{\mathcal{W}}_{\mathcal{U}}^{\beta,p}(\mathbb{C}^{n})$ defined using the concept of tempered ultradistributions with respect to the spaces of ultradifferentiable functions $\mathcal{U}(\mathbb{C})$. The space $\widetilde{\mathcal{W}}_{\mathcal{U}}^{\beta,p}(\mathbb{C}^{n})$ is a natural generalization of the classical Sobolev space with integer order, where some additional conditions of growth control have been introduced. We analyze some possible definitions and their roles in the structure theory. We prove some density and compact embedding results, investigating the possibility of the extension domains. Some of the results we present here are extensions of the existing ones with some additional conditions. The construction of the new family of fractional Sobolev space is considered within the framework of Fourier transform of ultradistributions of slow growth.
\end{abstract}
\textbf{Keyword}: Tempered ultradistributions. Fractional Sobolev spaces. Density. Extension. Embedding\ \\
\textbf{MSC Classification:}46F10, 46E35, 46F05, 34K37, 35R11, 26A33

\section{Introduction}

\noindent The theories of the respective foundations of fractional Sobolev spaces and spaces of ultradistributions of slow growth have been known for many years forming the cornerstone for this work and provide different important settings for studying the notions of boundary values problems of the partial differential equations (PDEs). The notion of Tempered ultradistributions is more fundamentally more general than the notion of tempered distributions which in turn form an extension in the notion of generalized functions through generalized transform. Many researchers have contributed to the emergence of tempered ultradistributions such as Al-Omari and J. F. M. Al-Omari \cite{1}, Pathak \cite{9}, and others in \cite{7,8}. These authors gave some impressive definitions to the various spaces of tempered ultradistributions, $\mathcal{U}'$ as duals of ultradifferentiable functions of rapid decrease to extend the Schwartz space $\mathcal{S}'$ of tempered distributions \cite{12}. \ \\
\nonumber More works involving the notions of fractional Sobolev spaces \cite{11,10} have been carried out.  Also a research interest question arises when one asks about fractional ultra-derivatives of tempered ultradistributions rather than the existing functions in the fractional Sobolev spaces. Motivated by such valid question the authors seek for a natural generalization by the use of tempered ultradistributions, which is more general than tempered distributions and seeks to generalize fractional derivatives with the inclusion of slow growth condition where the supremum of such function product with ultra-polynomial is finite. The theory of fractional Sobolev spaces of tempered ultradistributionstheory will serve as a unifying concept in the spirit of the natural generalization of full derivative and the slow growth conditions. \ \\
\noindent The subsequent parts of this work is organized as follows. In the preliminary section 2 we recall some notions from the theories of fractional Sobolev spaces and spaces of tempered ultradistributions including some existing definitions of the fractional Sobolev spaces. In the section we also present some characterizations relevant to the two theories and their corresponding properties. In section 3 we introduce the concept of the new fractional Sobolev space using both the underlying topologies of the both notions. In this section we also present some properties attributed to the newly constructed space as analogous to the integer order Sobolev spaces. The content of section 4 is devoted to the establishment of some advanced properties of the classical Sobolev space extended to the newly constructed spaces in terms of density results, extension theorems, and some continuous embedding theorems.

\section{Definitions and Preliminaries}

This section comprises of the existing notions of fractional Sobolev spaces and spaces of tempered ultradistributions, and see \cite{10} and \cite{1} for more detailed accounts on the respective subject areas. We present some existing definitions of fractional Sobolev space, space of ultradistributions of slow growth, and some properties of the theories. Throughout this work $\Omega$ denotes a finite or infinite subsets of $\mathbb{C}$ and $\mathbb{N}$ denotes the usual natural numbers set. In the next two subheadings, we present some elementary definitions and properties of classical Sobolev with fractional orders $\mathcal{W}^{\beta,p}(\Omega)$, where $\beta$ is an arbitrary real number, $0<\beta<1$ and $p\in (1,\infty)$, with $p\neq 2$.

\subsection{Structural Notion of Fractional Sobolev Space}
We introduce some versions of the existing definitions of the subject area. For more details on the definitions and underlying properties, see \cite{4,10}.
\begin{Def}
\label{def1}
Let $p\in [1,+\infty)$ and $\beta\in (0,1)$. The classical Sobolev space with fractional order $\mathcal{W}^{\beta,p}(\Omega)$ is defined as the space
\begin{eqnarray}
\label{eqn2.1}
\mathcal{W}^{\beta,p}(\Omega):=\Big\{ \mu\in L^{p}(\Omega):[\mu]_{\mathcal{W}^{\beta,p}(\Omega)}=\Bigg(\int_{\Omega}\int_{\Omega}\frac{|\mu(\xi)-\mu(\eta)|^{p}}{|\xi-\eta|^{n+p\beta}}d\xi d\eta\Bigg)^{\frac{1}{p}}<+\infty,\quad \xi\neq \eta\Big\}
\end{eqnarray}
The space (\ref{eqn2.1}) is endowed with the following natural norm
\[\Vert\mu\Vert_{\mathcal{W}^{\beta,p}(\Omega)}:=\Vert\mu\Vert_{L^{p}(\Omega)}+[\mu]_{\mathcal{W}^{\beta,p}(\Omega)}\]
\end{Def}

\begin{Rem}
\label{rem1}
The space $\mathcal{W}^{\beta,p}(\Omega)$ defined in (\ref{eqn2.1}) does not necessarily have the distributional nature in the sense of the seminorm unlike the classical Sobolev space $\mathcal{W}^{1,p}(\Omega)$, $[\cdot]_{\mathcal{W}^{\beta,p}(\Omega)}$.
\end{Rem}

\begin{Def}
\label{def2.2}
A function $\mu\in L^{p}(\Omega)$ belongs to the family of the fractional Sobolev space $\mathcal{W}^{\beta,p}(\Omega)$ if the following condition is verified:
\[\Vert\mu\Vert_{\mathcal{W}^{\beta,p}(\Omega)}:=\Vert\mu\Vert_{L^{p}(\Omega)}+|\mu|_{\mathcal{W}^{\beta,p}(\Omega)}\]
where
\begin{eqnarray}
\label{eqn2.2}
|\mu|_{\mathcal{W}^{\beta,p}(\Omega)}:=\Bigg[ \int_{\Omega}\int_{\Omega}\frac{|\mu(\xi)-\mu(\eta)|^{p}}{|\xi-\eta|^{n+p\beta}}d\xi d\eta\Bigg]^{\frac{1}{p}}\quad\text{if}\quad 1\leq p<\infty.
\end{eqnarray}
We also have
\[|\mu|_{\mathcal{W}^{\beta,p}(\Omega)}:=\operatorname*{ess\,sup}_{\xi,\eta\in\Omega; \xi\neq \eta}\frac{|\mu(\xi)-\mu(\eta)|}{|\xi-\eta|^{\beta}}\quad\text{for}\quad p=\infty\]
\end{Def}
We see that for $p=2$, then $H^{\beta}(\Omega):=\mathcal{W}^{\beta,2}(\Omega)$ for a Hilbert space $H^{\beta}$.

\begin{Def}
\label{def2.3}
Let $\beta>0$ and $1\leq p\leq \infty$. Let $l:=[\beta]$, $-\beta-l\leq \beta-l$, and $\alpha:=\beta-l.$ The definition of $\mathcal{W}^{\beta,p}(\Omega)$ is also given as
\begin{eqnarray}
\label{eqn2.3}
\mathcal{W}^{\beta,p}(\Omega):=\Big\{ \mu\in \mathcal{W}^{l,p}(\Omega):\frac{|D^{l}\mu(\xi)-D^{l}\mu(\eta)|}{|\xi-\eta|^{\frac{1}{p}+\alpha}}\in L^{\Omega\times\Omega}=\frac{|D^{l}\Big(\mu(\xi)-\mu(\eta)\Big)|}{|\xi-\eta|^{\frac{1}{p}+\alpha}}\in L^{p}(\Omega\times\Omega)\Big\}
\end{eqnarray}
naturally endowed with the norm
\begin{eqnarray}
\label{labeleqn2.3*}
\Vert\mu\Vert_{\mathcal{W}^{\beta,p}(\Omega)}:=\begin{cases}
\Big(\Vert\mu\Vert_{\mathcal{W}^{l,p}(\Omega)}+|D^{l}\mu|_{\mathcal{W}^{\beta,p}(\Omega)}\Big)^{\frac{1}{p}}\quad\text{if}\quad 1\leq p<\infty\\
\Vert\mu\Vert_{\mathcal{W}^{l,\infty}(\Omega)}+|D^{l}\mu|_{\mathcal{W}^{\beta,\infty}(\Omega)}\quad\text{if}\quad p=\infty
\end{cases}
\end{eqnarray}
where
\begin{eqnarray}
\label{labeleqn2.3**}
|\mu|_{\mathcal{W}^{\beta,p}(\Omega)}:=\begin{cases}
\Big(\int_{\Omega}\int_{\Omega}\frac{|\mu(\xi)-\mu(\eta)|^{p}}{|\xi-\eta|^{1+\beta p}}d\xi d\eta\Big)^{\frac{1}{p}}\quad\text{if}\quad 1\leq p<\infty\\
\displaystyle\operatorname*{ess\,sup}_{(\xi,\eta)\in\Omega\times\Omega}\frac{|\mu(\xi)-\mu(\eta)|}{|\xi-\eta|^{\beta}}\quad\text{if}\quad p=\infty
\end{cases}
\end{eqnarray}
and $D^l$ denotes the usual classical fractional derivatives.
\end{Def}
We also present another version of fractional Sobolev space defined via tempered distribution as follows

\begin{Def}
\label{def2.4}
Define the fractional Sobolev space $\mathcal{W}^{\beta,p}(\mathbb{R})$, if $\beta>0$ and $1\leq p\leq \infty$, by
\[\mathcal{W}^{\beta,p}(\mathbb{R})=\Big\{\mu\in L^{\mathbb{R}}:|\mu|_{\mathcal{W}^{\beta,p}(\mathbb{R})}<\infty\Big\}\]
where
\begin{eqnarray}
\label{eqn2.4}
|\mu|_{\mathcal{W}^{\beta,p}(\mathbb{R})}=\int_{\mathbb{R}}(1+|x|^{\beta p})|\hat{\mu}(x)|^{p}dx
\end{eqnarray}
Similarly, if $p=2$, then we have $H^{p}(\mathbb{R}):=\mathcal{W}^{\beta,2}(\mathbb{R})$.
\end{Def}

\subsection{Notion of Ultradistributions of Slow Growth}
We present the definition of more general Schwartz spaces of distributions known as the tempered ultradistributions. For more detailed information on different versions of tempered ultradistributions and their properties, refer to \cite{1,8}.
\begin{Def}
\label{def2.5}
Let $D^{i}$ denote the ultradifferential operator with respect to the multi-index $i=(i_{1}.i_{2},\cdots\,i_{n})$. A function $\varphi$ is said to be rapidly decreasing or of a rapid descent or of a slow increase growth in $\mathbb{C}^n$ if $\varphi\in C^{\infty}(\mathbb{C}^{n})$ and if the following condition is verified:
\begin{eqnarray}
\label{eqn2.5}
\forall\quad i\in\mathbb{N}^{n},\quad\forall\quad \beta\in\mathbb{N},\quad (1+|\xi|^{\beta})D^{i}\varphi\in L^{\infty}(\mathbb{C}^{n})
\end{eqnarray}
Note that the set of functions that satisfy the condition (\ref{def2.5}) can be denoted by $\mathcal{U}(\mathbb{C}^{n})$ and they are called ultradifferentiable functions satisfying growth condition. Also recall that $\mathcal{U}(\mathbb{C})|_{\mathbb{R}}=\mathcal{S}(\mathbb{R})$. \ \\
\noindent The condition (\ref{def2.5}) can also be formulated in two forms:
\[\forall\quad (i,\beta),\quad (1+|\xi|^{\beta})|D^{i}\varphi|\in L^{1}(\mathbb{C}^{n})\]
or
\[\forall\quad (i,\beta),\quad \lim_{|\xi|\to\infty}(1+|\xi|^{\beta})|D^{i}\varphi|=0\]
\end{Def}

\begin{Rem}
The topological vector space $\mathcal{U}(\mathbb{C}^{n})$ is naturally endowed with a seminorm (countable number of seminorms (norms)) $\eta_{\beta,i}$ such that
\[\eta_{\beta,i}(\varphi)=\sup\Big(|\text{Re}(\xi)|^{m}|D^{k}\varphi(\xi)|\Big)<\infty\]
We let $\mathcal{U}'(\mathbb{C}^{n})$ denote the topological dual of $\mathcal{U}(\mathbb{C}^{n})$ which is a locally convex topological vector space and is a subspace of $\mathcal{D}'$, the space of distributions and a super space of $\mathcal{S}'(\mathbb{R}^{n})$, the space of tempered distributions, with an inclusion.
\end{Rem}
In what follows, we present some different versions of definitions of space of tempered ultradistributions $\mathcal{U}'(\mathbb{C}^{n})$
\begin{Def}
\label{def2.6}
The space $\mathcal{U}\subset\mathcal{S}$ is the space of functions in $\mathcal{S}$ that can also be extended into the complex plane as entire functions with rapid descend on strips. The space $\mathcal{U}(\mathbb{C})$ consists of all entire functions $\varphi$ such that the following condition is verified:
\begin{eqnarray}
\label{eqn2.6}
\Vert \varphi\Vert_{p}=\sup_{\substack{|\text{Im}(\xi)|<p\\ \xi\in \mathbb{C}}}\Big\{ (1+|\xi|^{p})|\varphi(\xi)|\Big\}<\infty\quad\forall\quad p\in \mathbb{N}
\end{eqnarray}
In particular, the topology of the space $\mathcal{U}(\mathbb{C})$ is generated by the norm defined in (\ref{def2.6}), and is Fr$\breve{\text{e}}$chet for which $\varphi(\xi)\in\mathcal{U}$, $\varphi(\xi)|_{\mathbb{R}}\in \mathcal{S}(\mathbb{R})$.
\end{Def}

\begin{Def}
\label{def2.7}
The topological dual space of the space $\mathcal{U}$ is called the space of tempered ultradistributions satisfying the condition (\ref{eqn2.6}).\ \\
An ultradistribution of slow growth is said to vanish in an open set $K\in\mathbb{C}$ if $\varphi(x+iy)-\varphi(x-iy)\longrightarrow 0$ for $x\in K$ whenever $y\to 0$. We also see that a tempered ultradistribution $\mu$ in $\mathbb{R}$ is compact supported if there exists a disk $D$ such that any $\varphi$ in $[\varphi(\xi)]\in\mathcal{U}'$ has an analytic extension to $\Big(\frac{\mathbb{C}}{D}\Big)^{n}$ being of at most polynomial growth there. Equivalently, an ultradistribution of slow growth $\mu$ has a compact support if there exists a unique representative function $\varphi'(\xi)$ vanishing at $\infty$.
\end{Def}

\begin{Rem}
The motivation for the development of tempered ultradistributions in the fractional Sobolev spaces emanated from the possibility to deal with the notions of smoothness and non-smoothness in functions in exchange of the differential operators with ultra-differential operators and lay emphasis on the control of the growth of the functions in $\widetilde{\mathcal{W}}^{\beta,p}$.
\end{Rem}

\subsection{Some Properties of Fractional Sobolev Spaces}
We refer to some known properties and characterizations exhibited by the classical Sobolev spaces with fractional orders as presented in \cite{2,10,6,11} . Some existing key results such as the density theorems, extension results, and embedding theorems are presented in the aforementioned references as also analogous to the integer order classical Sobolev spaces.\ \\
\noindent The presentation of the density properties of smooth and smoothly supported functions in the fractional Sobolev spaces is taken care of in the first instance. The relationship between the sets $C^{\infty}_{0}(\Omega)$ and $\mathcal{W}^{\beta,p}(\Omega)$ can be investigated since $C_{0}^{\infty}$ is defined as
\[C_{0}^{\infty}(\Omega):=\Big\{\mu:\mathbb{R}^{n}\longrightarrow\mathbb{R}^{n},\quad\text{supp}(\mu)\subseteq \Omega\quad\text{is compact}\Big\}\]
where $\text{supp}(\mu)$ has the usual definition or meaning.\ \\
The theory of convolution plays a vital role in the notion of the fractional Sobolev space such that for any sufficiently small $\varepsilon$, convolution does not change with respect to the norm (\ref{eqn2.3}) as presented in the following cited result.

\begin{Lem}(\cite{2})
\label{lem2.15}
Let $\displaystyle\mu\in \mathcal{W}^{\beta,p}(\mathbb{R}^{n})$. Then $\displaystyle \Vert\mu_{\varepsilon}-\mu\Vert_{\mathcal{W}^{\beta,p}(\Omega)}\longrightarrow 0$ as $\varepsilon$, where $\mu_{\varepsilon}(x)=(\mu \ast\xi_{\varepsilon})(y),\quad y\in\mathbb{R}^n$.
\end{Lem}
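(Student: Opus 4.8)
The plan is to split the $\mathcal{W}^{\beta,p}$ norm into its two constituent pieces and treat each separately, relying throughout on the standard theory of approximate identities. Write $\xi_\varepsilon$ for the rescaled mollifier so that $\mu_\varepsilon=\mu\ast\xi_\varepsilon$, and recall that $\Vert\mu_\varepsilon-\mu\Vert_{\mathcal{W}^{\beta,p}}=\Vert\mu_\varepsilon-\mu\Vert_{L^p}+[\mu_\varepsilon-\mu]_{\mathcal{W}^{\beta,p}}$, where $[\,\cdot\,]$ is the Gagliardo seminorm of (\ref{eqn2.2}). The $L^p$ term is the classical assertion that convolution against an approximate identity converges in $L^p$, which I would simply invoke. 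The entire substance of the lemma therefore lies in controlling the seminorm, and it is convenient to work on all of $\mathbb{R}^n$ (the convolution being defined there), the version over $\Omega$ following by restriction.

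For the seminorm I would first recast it as a genuine $L^p$ norm on the product space. Setting, for $x\neq y$ in $\mathbb{R}^n$,
\[
g(x,y):=\frac{\mu(x)-\mu(y)}{|x-y|^{n/p+\beta}},
\]
the hypothesis $\mu\in\mathcal{W}^{\beta,p}(\mathbb{R}^n)$ says precisely that $g\in L^p(\mathbb{R}^n\times\mathbb{R}^n)$ with $[\mu]_{\mathcal{W}^{\beta,p}}=\Vert g\Vert_{L^p(\mathbb{R}^n\times\mathbb{R}^n)}$. The key observation is that the corresponding integrand for the mollified function is itself a mollification of $g$ along the diagonal: since $\mu_\varepsilon(x)-\mu_\varepsilon(y)=\int\xi_\varepsilon(z)\big(\mu(x-z)-\mu(y-z)\big)\,dz$ and $|x-y|$ is invariant under the common shift $(x,y)\mapsto(x-z,y-z)$, one obtains
\[
\frac{\mu_\varepsilon(x)-\mu_\varepsilon(y)}{|x-y|^{n/p+\beta}}=\int\xi_\varepsilon(z)\,g(x-z,y-z)\,dz=:G_\varepsilon(x,y).
\]
Because the Gagliardo integrand is linear in the function, subtracting $g$ yields exactly the integrand associated with $\mu_\varepsilon-\mu$, so that $[\mu_\varepsilon-\mu]_{\mathcal{W}^{\beta,p}}=\Vert G_\varepsilon-g\Vert_{L^p}$.

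From here the argument is the familiar two-step estimate. First, writing $G_\varepsilon-g=\int\xi_\varepsilon(z)\big(g(\cdot-z,\cdot-z)-g\big)\,dz$ and applying Minkowski's integral inequality gives
\[
\Vert G_\varepsilon-g\Vert_{L^p}\leq\int\xi_\varepsilon(z)\,\big\Vert g(\cdot-z,\cdot-z)-g\big\Vert_{L^p}\,dz.
\]
Second, since $g\in L^p(\mathbb{R}^{2n})$, continuity of translation in $L^p$ forces $\Vert g(\cdot-z,\cdot-z)-g\Vert_{L^p}\to 0$ as $z\to 0$; as $\xi_\varepsilon$ is a probability density concentrating at the origin, the right-hand side tends to $0$ as $\varepsilon\to 0$. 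Combining this with the $L^p$ convergence of the first piece completes the argument.

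I expect the main obstacle to be technical rather than conceptual: rigorously justifying the interchange of integration (Fubini--Tonelli) underlying the Minkowski step, together with the joint measurability of $(x,y,z)\mapsto g(x-z,y-z)$ and the integrability needed to deploy the continuity-of-translation fact on the $2n$-dimensional product space. The invariance of the singular denominator $|x-y|^{-(n/p+\beta)}$ under the diagonal shift is precisely what makes the reduction go through, and carefully verifying that invariance is the one place where the singular weight could otherwise cause difficulty.
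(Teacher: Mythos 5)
Your argument is correct and complete: the reduction of the Gagliardo seminorm to an $L^p(\mathbb{R}^{2n})$ norm of $g$, the observation that the diagonal shift leaves $|x-y|$ invariant so that the mollified integrand is a genuine convolution of $g$, and the Minkowski-plus-continuity-of-translation estimate together constitute the standard proof of this fact. Note, however, that the paper offers no proof of its own here --- the lemma is simply quoted from reference \cite{2} --- and your argument is essentially the one given in that cited source; your remark that the $\Omega$-norm case follows by restriction is the right way to reconcile the hypothesis on $\mathbb{R}^n$ with the conclusion stated over $\Omega$, since the double integral over $\Omega\times\Omega$ is dominated by the one over $\mathbb{R}^n\times\mathbb{R}^n$.
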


The following result is termed the density theorem centered on continuous boundary.
\begin{Th}(\cite{2})
\label{theo2.18}
Let $\Omega$ be an open subset of $\mathbb{R}^{n}$ with its boundary. Then, for any $\mu\in \mathcal{W}^{\beta,p}_{0}(\mathbb{R}^{n})$ there exists a sequence $\rho_{\varepsilon}\in C_{0}^{\infty}(\Omega)$ such that $\displaystyle\Vert\rho_{\varepsilon}-\mu\Vert\longrightarrow 0$ as $\varepsilon\to 0$. In other words, $C_{0}^{\infty}(\Omega)$ is dense subspace of $\mathcal{W}^{\beta,p}_{0}(\Omega)$.
\end{Th}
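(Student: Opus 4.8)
The plan is to establish the density by the classical two-step procedure: truncate the datum to produce compactly supported approximants, and then mollify to obtain smooth ones, invoking Lemma~\ref{lem2.15} for the convergence of the mollifications. I will carry out the argument for $\Omega=\mathbb{R}^{n}$, the clean case; the general open $\Omega$ follows after a standard localization near the boundary using that elements of $\mathcal{W}^{\beta,p}_{0}$ vanish there. So fix $\mu\in\mathcal{W}^{\beta,p}_{0}(\mathbb{R}^{n})$ and choose a cutoff family $\chi_{R}\in C_{0}^{\infty}(\mathbb{R}^{n})$ with $0\le\chi_{R}\le 1$, $\chi_{R}\equiv 1$ on the ball $B_{R}$, $\operatorname{supp}\chi_{R}\subset B_{2R}$, and $\Vert\nabla\chi_{R}\Vert_{L^{\infty}}\le C/R$, and set $\mu_{R}:=\chi_{R}\mu$. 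The first goal is to show $\Vert\mu_{R}-\mu\Vert_{\mathcal{W}^{\beta,p}}\to 0$ as $R\to\infty$. The $L^{p}$ part is immediate by dominated convergence, since $\vert\mu_{R}-\mu\vert\le\vert\mu\vert$ and $\mu_{R}\to\mu$ pointwise.

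The genuine difficulty, and the step I expect to be the main obstacle, is controlling the Gagliardo seminorm $[\mu_{R}-\mu]_{\mathcal{W}^{\beta,p}}$, for which there is no exact Leibniz rule. Writing $g_{R}:=\chi_{R}-1$, so that $\mu_{R}-\mu=g_{R}\mu$, I would use the identity
\begin{equation*}
(g_{R}\mu)(\xi)-(g_{R}\mu)(\eta)=g_{R}(\xi)\bigl(\mu(\xi)-\mu(\eta)\bigr)+\mu(\eta)\bigl(g_{R}(\xi)-g_{R}(\eta)\bigr)
\end{equation*}
together with $|a+b|^{p}\le 2^{p-1}(|a|^{p}+|b|^{p})$ to split the double integral into two contributions. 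The first, $\iint\frac{|g_{R}(\xi)|^{p}|\mu(\xi)-\mu(\eta)|^{p}}{|\xi-\eta|^{n+p\beta}}\,d\xi\,d\eta$, is dominated by the finite Gagliardo integrand of $\mu$ and is supported in $\{|\xi|\ge R\}$, hence vanishes as $R\to\infty$ by dominated convergence. The second, $\iint\frac{|\mu(\eta)|^{p}|g_{R}(\xi)-g_{R}(\eta)|^{p}}{|\xi-\eta|^{n+p\beta}}\,d\xi\,d\eta$, is the crux: I would use $|g_{R}(\xi)-g_{R}(\eta)|\le\min\{2,\,(C/R)|\xi-\eta|\}$ and split the inner integral at $|\xi-\eta|=R$. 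The near region yields $(C/R)^{p}\int_{|\zeta|\le R}|\zeta|^{p-n-p\beta}\,d\zeta\le C'R^{-p\beta}$ (convergence near the origin is exactly where $0<\beta<1$ enters, since it gives $p-n-p\beta>-n$), while the far region yields $2^{p}\int_{|\zeta|\ge R}|\zeta|^{-n-p\beta}\,d\zeta\le C''R^{-p\beta}$. Both bounds are uniform in $\eta$, so integrating against $\Vert\mu\Vert_{L^{p}}^{p}<\infty$ and letting $R\to\infty$ annihilates this term as well.

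Once the truncation is established, the remaining steps are routine. Each $\mu_{R}$ lies in $\mathcal{W}^{\beta,p}$ with compact support, so by Lemma~\ref{lem2.15} its mollifications $(\mu_{R})_{\varepsilon}=\mu_{R}\ast\xi_{\varepsilon}$ satisfy $\Vert(\mu_{R})_{\varepsilon}-\mu_{R}\Vert_{\mathcal{W}^{\beta,p}}\to 0$ as $\varepsilon\to 0$; moreover $(\mu_{R})_{\varepsilon}\in C_{0}^{\infty}(\mathbb{R}^{n})$ for every $\varepsilon$, being smooth and supported in the $\varepsilon$-neighbourhood of $\operatorname{supp}\mu_{R}\subset B_{2R}$. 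Combining the truncation estimate with Lemma~\ref{lem2.15} and a triangle-inequality bound $\Vert(\mu_{R})_{\varepsilon}-\mu\Vert\le\Vert(\mu_{R})_{\varepsilon}-\mu_{R}\Vert+\Vert\mu_{R}-\mu\Vert$, I would choose $\varepsilon=\varepsilon(R)\to 0$ suitably and extract, by a diagonal argument, a single sequence $\rho_{R}:=(\mu_{R})_{\varepsilon(R)}\in C_{0}^{\infty}(\mathbb{R}^{n})$ with $\Vert\rho_{R}-\mu\Vert_{\mathcal{W}^{\beta,p}}\to 0$. Relabelling $\rho_{R}$ as $\rho_{\varepsilon}$ delivers the approximating sequence and hence the density of $C_{0}^{\infty}(\Omega)$ in $\mathcal{W}^{\beta,p}_{0}(\Omega)$.
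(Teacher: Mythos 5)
The paper offers no proof of this statement at all: it is quoted verbatim from reference \cite{2} as a preliminary, so there is nothing internal to compare your argument against. Taken on its own terms, your proof of the case $\Omega=\mathbb{R}^{n}$ is correct and is essentially the standard truncation-plus-mollification argument of \cite{2}: the splitting $(g_{R}\mu)(\xi)-(g_{R}\mu)(\eta)=g_{R}(\xi)(\mu(\xi)-\mu(\eta))+\mu(\eta)(g_{R}(\xi)-g_{R}(\eta))$, the domination of the first piece by the integrable Gagliardo integrand on $\{|\xi|\ge R\}$, and the $\min\{2,(C/R)|\xi-\eta|\}$ bound with the split at $|\xi-\eta|=R$ giving $O(R^{-p\beta})$ uniformly in $\eta$ are all sound; the exponent bookkeeping ($p-n-p\beta>-n$ near the origin, $-n-p\beta<-n$ at infinity) checks out, and the final appeal to Lemma~\ref{lem2.15} plus a diagonal extraction is fine.

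The one genuine gap is your dismissal of the general open $\Omega$ as ``a standard localization near the boundary.'' For $\Omega\neq\mathbb{R}^{n}$ that step is precisely where the boundary hypothesis in the statement does its work, and it is not automatic: after truncating, the function $\mu_{R}$ is supported in $\overline{\Omega}\cap B_{2R}$ but not necessarily at positive distance from $\partial\Omega$, so mollifying it produces functions whose support leaks outside $\Omega$ and which therefore do not lie in $C_{0}^{\infty}(\Omega)$. The proof in \cite{2} inserts an intermediate step --- an inward translation (or dilation) of the truncated function, controlled in the $\mathcal{W}^{\beta,p}$ norm by continuity of translation, which pushes the support compactly inside $\Omega$ \emph{before} mollification --- and it is exactly here that the continuity of $\partial\Omega$ is used; without some regularity of the boundary the density statement is in fact false. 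Since that reduction is the actual content of the cited theorem beyond the $\mathbb{R}^{n}$ case, it needs to be carried out rather than asserted.
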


For more detailed information on some extension theorems, refer to \cite{11}, and for more details on some continuous embedding results, see \cite{4,6,11}. The extension results are hinged on both the interior and exterior results.

\section{On Fractional Sobolev Spaces of Tempered Ultradistributions $\widetilde{\mathcal{W}}_{\mathcal{U}}^{\beta,p}(\mathbb{C})$}
In this section we introduce the extended family of fractional Sobolev space via the use of ultradistributions of slow growth. We shall give some versions of the new family with their respective norms. Results are given based on the extension from the integer order cases to the fractional order cases within framework of tempered ultradistributions. For some details on the classical Sobolev spaces using tempered ultradistributions, refer to \cite{3}. In the sequel, let $\Omega$ be a domain in the complex plane $\mathbb{C}$ or the complex domain.

\begin{Def}
\label{def3.1}
Given $\beta>0$ and $1\leq p\leq \infty$. We define the fractional Sobolev space of tempered ultradistributions $\hat{\mu}:\mathcal{U}\longrightarrow \mathbb{C}$ with respective to ultra-differentiable function $\mu$ as
\begin{eqnarray}
\label{eqn3.1}
\widetilde{\mathcal{W}}_{\mathcal{U}}^{\beta,p}(\mathbb{C}^{n}):=\Big\{ \mu\in \mathcal{U}(\mathbb{C}^{n})\quad\text{and}\quad \mu\in L^{p}(\Omega): [\mu]_{\widetilde{\mathcal{W}}_{\mathcal{U}}^{\beta,p}(\mathbb{C}^{n})}<\infty\Big\}
\end{eqnarray}
where
\[[\mu]_{\widetilde{\mathcal{W}}_{\mathcal{U}}^{\beta,p}(\mathbb{C}^{n})}=\int_{\mathbb{C}^{n}}\Big(1+|\xi|^{n+p\beta}\Big)|\hat{\mu}(\xi)|^{p}d\xi,\quad\text{for}\quad\xi\in\mathbb{C}^n\]
\end{Def}
We also present another version of Definition \ref{def3.1} for a complex open subset $\Omega$ of $\mathbb{C}^n$.

\begin{Def}
\label{def3.2}
Let $p\in [1,\infty]$ and $\beta\in (0,1)$. The new fractional Sobolev space can also be described as
\begin{eqnarray}
\label{eqn3.2}
\widetilde{\mathcal{W}}_{\mathcal{U}}^{\beta,p}(\Omega):=\Big\{ \mu\in \mathcal{U}(\mathbb{C}^{n}):\Big(\int_{\Omega}\int_{\Omega}\frac{(1+|\xi|^{n+p\beta})|\mu(\xi)-\mu(\eta)|^{p}}{|\xi-\eta|^{n+p\beta}}d\xi d\eta\Big)^{\frac{1}{p}}   <\infty,\quad \xi\neq \eta\Big\}
\end{eqnarray}
The space (\ref{eqn3.2}) defined in Definition \ref{def3.1} is endowed with the natural norm
\[\Vert\mu\Vert_{\widetilde{\mathcal{W}}_{\mathcal{U}}^{\beta,p}(\Omega)}=\Vert\mu\Vert_{L^{p}(\Omega)}+[u]_{\widetilde{\mathcal{W}}_{\mathcal{U}}^{\beta,p}(\Omega)}\]
For $p=\infty$, we have
\[\Vert\mu\Vert_{\widetilde{\mathcal{W}}_{\mathcal{U}}^{\beta,\infty}(\Omega)}=\Vert\mu\Vert_{L^{\infty}(\Omega)}+[u]_{\widetilde{\mathcal{W}}_{\mathcal{U}}^{\beta,\infty}(\Omega)}\]
where
\[[u]_{\widetilde{\mathcal{W}}_{\mathcal{U}}^{\beta,\infty}(\Omega)}=\sup_{(\xi,\eta)\in\Omega\times\Omega}\frac{(1+|\xi|^{n+\beta})|\mu(\xi)-\mu(\eta)|}{|\xi-\eta|^{n+\beta}}\]
\end{Def}
It is natural to define the classical Sobolev space with fractional order of tempered ultradistributions with reference to weak fractional ultra-derivatives or fractional derivatives and condition \ref{def2.6} in the same manner as done in the integer order case. We now introduce our fractional Sobolev spaces of tempered ultradistributions using the weak fractional derivative analogy as follows.

\begin{Def}
\label{def3.3}
For $\beta>0$, $1\leq p\leq \infty$, $|\beta|\leq k$, the fractional Sobolev space $\widetilde{\mathcal{W}}_{\mathcal{U}}^{\beta,p}(\Omega)$ is defined by
\begin{eqnarray}
\label{eqn3.3}
\widetilde{\mathcal{W}}_{\mathcal{U}}^{\beta,p}(\Omega):=\Big\{\mu\in \widetilde{\mathcal{W}}_{\mathcal{U}}^{k,p}(\Omega)\quad\text{and}\quad\mu\in \mathcal{U}(\Omega): D^{\beta}\Big((1+|\xi|^{n+p\beta})\hat{\mu}\Big)\in L^{p}(\Omega)\Big\}
\end{eqnarray}
for $\Omega\subset\mathbb{C}^n$ endowed with the respective norms:\ \\
For $1\leq p<\infty$, we have
\[\Vert\mu\Vert_{\widetilde{\mathcal{W}}_{\mathcal{U}}^{\beta,p}(\Omega)}=\Big(\Vert\mu\Vert_{\widetilde{\mathcal{W}}_{\mathcal{U}}^{k,p}(\Omega)}^{p}+\Vert D^{\beta}(1+|\xi|^{n+p\beta}\hat{\mu})\Vert_{L^{p}(\Omega)}\Big)^{\frac{1}{p}}\]
For $p=\infty$, we have
\[\Vert\mu\Vert_{\widetilde{\mathcal{W}}_{\mathcal{U}}^{\beta,\infty}(\Omega)}=\Vert\mu\Vert_{\widetilde{\mathcal{W}}_{\mathcal{U}}^{k,\infty}(\Omega)}+\Vert D^{\beta}(1+|\xi|^{n+\beta}\hat{\mu})\Vert_{L^{\infty}(\Omega)}\]
\end{Def}

It is easy to see that the ultra-derivatives of a tempered ultradistribution, and the product of a tempered ultradistributions and a slowly increasing ultra-differentiable function are also tempered ultradistributions. We present the first property involving the completeness of the fractional Sobolev spaces of ultradistributions of slow growth.

\begin{Prop}
\label{prop3.4}
Let $0<\beta<1$ and $1\leq p\leq \infty$, and let $\mu\in\mathcal{U}'(\Omega)$ satisfying condition \ref{eqn2.6}. Then the space $\widetilde{\mathcal{W}}_{\mathcal{U}}^{\beta,p}(\Omega)$ endowed with the norm
\[\Vert\mu\Vert_{\widetilde{\mathcal{W}}_{\mathcal{U}}^{\beta,p}(\Omega)}=\Big(\Vert\mu\Vert_{L^p}^{p}+[\mu]_{\widetilde{\mathcal{W}}_{\mathcal{U}}^{\beta,p}(\Omega)}\Big)^{\frac{1}{p}}\]
where
\[[\mu]_{\widetilde{\mathcal{W}}_{\mathcal{U}}^{\beta,p}}=\Big(\int_{\Omega}\int_{\Omega}\frac{(1+|\xi|^{n+p\beta})|\mu(\xi)-\mu(\eta)|^{p}}{|\xi-\eta|^{n+p\beta}}d\xi d\eta\Big)^{\frac{1}{p}}   <\infty,\quad \xi\neq \eta\]
is a Banach space.
\end{Prop}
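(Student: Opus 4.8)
The plan is to run the standard completeness argument for Gagliardo-type Sobolev spaces, adapted to the weighted seminorm and to the ultradistributional setting. First I would fix a Cauchy sequence $\{\mu_k\}$ in $\widetilde{\mathcal{W}}_{\mathcal{U}}^{\beta,p}(\Omega)$ with respect to the stated norm. Since the $L^p$-part is dominated by the full norm, $\{\mu_k\}$ is immediately Cauchy in $L^p(\Omega)$; invoking completeness of $L^p(\Omega)$ produces a limit $\mu\in L^p(\Omega)$ with $\mu_k\to\mu$ in $L^p$, and hence a subsequence converging almost everywhere on $\Omega$.

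Next I would encode the seminorm as an honest $L^p$-norm on the product space. Introduce on $\Omega\times\Omega$ the positive measure
\[d\nu(\xi,\eta)=\frac{1+|\xi|^{n+p\beta}}{|\xi-\eta|^{n+p\beta}}\,d\xi\,d\eta\]
together with the difference functions $F_k(\xi,\eta)=\mu_k(\xi)-\mu_k(\eta)$, so that $[\mu_j-\mu_k]_{\widetilde{\mathcal{W}}_{\mathcal{U}}^{\beta,p}}=\Vert F_j-F_k\Vert_{L^p(\Omega\times\Omega,\,d\nu)}$. Because $\{\mu_k\}$ is Cauchy in the full norm, $\{F_k\}$ is Cauchy in the complete space $L^p(\Omega\times\Omega,d\nu)$, so it converges there to some $G$. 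The identification step is to show $G(\xi,\eta)=\mu(\xi)-\mu(\eta)$ almost everywhere: the a.e.\ convergence $\mu_k\to\mu$ gives $F_k\to\mu(\xi)-\mu(\eta)$ pointwise a.e.\ off the diagonal, while weighted-$L^p$ convergence gives a further subsequence with $F_k\to G$ a.e.\ $d\nu$; since the density of $d\nu$ is strictly positive and finite off the diagonal, $d\nu$ is mutually absolutely continuous with Lebesgue measure on $(\Omega\times\Omega)\setminus\{\xi=\eta\}$, and the two limits must coincide.

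From this identification I would conclude that $[\mu]_{\widetilde{\mathcal{W}}_{\mathcal{U}}^{\beta,p}}=\Vert G\Vert_{L^p(d\nu)}<\infty$, so $\mu$ lies in the space, and that $[\mu_k-\mu]_{\widetilde{\mathcal{W}}_{\mathcal{U}}^{\beta,p}}=\Vert F_k-G\Vert_{L^p(d\nu)}\to 0$; combined with $\Vert\mu_k-\mu\Vert_{L^p}\to 0$ this yields $\Vert\mu_k-\mu\Vert_{\widetilde{\mathcal{W}}_{\mathcal{U}}^{\beta,p}(\Omega)}\to 0$, establishing completeness. The case $p=\infty$ is handled analogously, replacing the weighted $L^p$ on $\Omega\times\Omega$ by the weighted $L^\infty$ and using uniform a.e.\ estimates in place of the product measure $d\nu$.

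The main obstacle is reconciling the membership requirement $\mu\in\mathcal{U}(\mathbb{C}^n)$ built into Definition \ref{def3.2} with the fact that an $L^p$-limit of ultradifferentiable functions of slow growth need not itself be smooth. I would address this by reading the condition in the ultradistributional sense consistent with the Fourier-transform construction of Section 3: the finiteness of the weighted integral is precisely the polynomial-weight control that governs membership, so the limit is to be interpreted as the element of $\mathcal{U}'(\Omega)$ whose representative carries the required weighted integrability, rather than as a classical smooth function. Making this rigorous — that is, showing the limit's Fourier transform still obeys the growth bound (\ref{eqn2.6}) defining $\mathcal{U}$, so that the space is genuinely closed and not merely its abstract completion — is the delicate point, and the place where the ultradistributional framework, rather than the routine $L^p$ machinery, must carry the argument.
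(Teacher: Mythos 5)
Your argument follows essentially the same route as the paper's: extract the $L^p$ limit of the Cauchy sequence, view the weighted difference quotients as a Cauchy sequence in an $L^p$ space over $\Omega\times\Omega$, identify the two limits along an a.e.\ convergent subsequence, and pass to the limit in the seminorm. Your version is in fact the more careful one. The paper's auxiliary functions carry exponents ($|\xi-\eta|^{\frac{n}{\beta}+p}$ in one display, $|\xi-\eta|^{p}$ and $(1+|\xi|^{p\beta})$ in the next) that do not match the seminorm in the statement, and its final estimate bounds the double integral by a supremum over a variable in which the integrand has already been integrated, then collapses it to a single integral of an unexplained $\varphi(\xi_1)$; that step does not actually establish $[\mu]_{\widetilde{\mathcal{W}}_{\mathcal{U}}^{\beta,p}}<\infty$. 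Your identification of the $L^p(d\nu)$ limit with $\mu(\xi)-\mu(\eta)$ via mutual absolute continuity of $d\nu$ and Lebesgue measure off the diagonal, followed by Fatou or the completeness of $L^p(d\nu)$, does.

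The one genuine gap is the one you flag yourself at the end. Definition \ref{def3.2} requires elements of $\widetilde{\mathcal{W}}_{\mathcal{U}}^{\beta,p}(\Omega)$ to lie in $\mathcal{U}(\mathbb{C}^{n})$, i.e.\ to be entire with the strip-decay property (\ref{eqn2.6}), and an $L^p$ limit of such functions need not retain either property. Neither your argument nor the paper's closes this point: the paper merely writes that it applies ``condition (\ref{eqn2.6})'' without verifying that the limit satisfies it. As long as membership in $\mathcal{U}(\mathbb{C}^{n})$ is part of the definition of the space, completeness is not actually proved; one must either show that the relevant bounded sets of entire representatives are closed under this convergence (for instance by a normal-families argument on the strips $|\mathrm{Im}(\xi)|<p$ using the uniform bounds (\ref{eqn2.6})), or weaken the definition so that only the $L^p$ and weighted-seminorm conditions are imposed --- in which case your argument is complete and the paper's, once its exponents are repaired, reduces to the same one.
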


\begin{proof}
We show that $\widetilde{\mathcal{W}}_{\mathcal{U}}^{\beta,p}(\Omega)$ is Banach. To do this, let $\lbrace\mu_{n}\rbrace$ be a Cauchy sequence in $\widetilde{\mathcal{W}}_{\mathcal{U}}^{\beta,p}(\Omega)$ for the norm $\Vert\mu\Vert_{\widetilde{\mathcal{W}}_{\mathcal{U}}^{\beta,p}(\Omega)}$.\ \\
Since $\lbrace\mu_{n}\rbrace$ is Cauchy in $L^p$, then $\mu_{n}\longrightarrow \mu$ in $L^{p}$ for $1\leq p\leq \infty$. Hence the sequence. Hence the sequence
\begin{eqnarray}
\label{eqn3.4}
\nu_{n}(\xi,\eta)=\frac{(1+|\xi|^{n+p\beta})\Big(\mu(\xi)-\mu(\eta)\Big)}{|\xi-\eta|^{\frac{n}{\beta}+p}}
\end{eqnarray}
is Cauchy in $L^p$ which converge to an element
\begin{eqnarray}
\label{eqn3.5}
\nu(\xi,\eta)=\frac{(1+|\xi|^{p\beta})\Big(\mu(\xi)-\mu(\eta)\Big)}{|\xi-\eta|^{p}}
\end{eqnarray}
in $L^p$. Now it suffice to create a subsequence $\lbrace\mu_{t_{n}}\rbrace$ of $\lbrace\mu_{n}\rbrace$ that converges almost everywhere to $\mu$. Recall that the sub-sequence to the sequence defined in (\ref{eqn3.4}) converges almost every pair $(\xi,\eta)$ to (\ref{eqn3.5}).\ \\
With the application of the condition (\ref{eqn2.6}) and Fatou's lemma, we have
\begin{align*}
\int_{\Omega}\int_{\Omega}\frac{(1+|\xi|^{n+p\beta})|\mu(\xi)-\mu(\eta)|^{p}}{|\xi-\eta|^{n+p\beta}}d\xi d\eta &\leq \sup_{\substack{|\text{Im}(\xi)|<p\beta\\ \xi,\eta\in\mathbb{C}}} \int_{\Omega}\int_{\Omega}\frac{(1+|\xi|^{n+p\beta})|\mu(\xi)-\mu(\eta)|^{p}}{|\xi-\eta|^{n+p\beta}}d\xi d\eta\\
     &=\sup_{\substack{|\text{Im}(\xi_{1})|<p\beta\\ \xi_{1}\in\mathbb{C}}} \int_{\Omega}\frac{(1+|\xi_{1}|^{n+p\beta})|\varphi(\xi_{1})|^{p}}{|\xi_{1}|^{n+p\beta}}d\xi_{1}<\infty
\end{align*}
Hence $\displaystyle \mu\in \widetilde{\mathcal{W}}_{\mathcal{U}}^{\beta,p}(\Omega)$. Therefore $\mu\to\mu$ in $\widetilde{\mathcal{W}}_{\mathcal{U}}^{\beta,p}(\Omega)$ by taking $m\longrightarrow\infty$ in $\displaystyle\Vert\nu_{n}-\nu_{m}\Vert_{L^{p}(\Omega\times\Omega)}$.
\end{proof}

The next result deduce the condition for which tempered ultradistribution can be included in $\widetilde{\mathcal{W}}_{\mathcal{U}}^{\beta,p}(\Omega)$.

\begin{Prop}
\label{prop3.5}
The $\widetilde{\mathcal{W}}_{\mathcal{U}}^{\beta,p}(\Omega)$ is said to of the local type if for every $\mu\in \widetilde{\mathcal{W}}_{\mathcal{U}}^{\beta,p}(\Omega)$ and for every $\varphi\in\mathcal{U}(\Omega)$, then $\mu\varphi\in \widetilde{\mathcal{W}}_{\mathcal{U}}^{\beta,p}(\Omega)$ such that $\displaystyle \sup_{|\text{Im}(\xi)|<p\beta}\Big\{(1+|\xi|^{p\beta})|\mu\varphi(\xi)|^{p}\Big\}<\infty$.
\end{Prop}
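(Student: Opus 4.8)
The plan is to verify the two defining requirements of $\widetilde{\mathcal{W}}_{\mathcal{U}}^{\beta,p}(\Omega)$ for the product $\mu\varphi$, namely membership in $\mathcal{U}(\mathbb{C}^n)\cap L^p(\Omega)$ and finiteness of the weighted Gagliardo seminorm, and then read off the stated supremum bound. First I would establish that $\mathcal{U}(\mathbb{C}^n)$ is closed under pointwise multiplication. Since $\mu$ and $\varphi$ are both entire and of rapid descent in the sense of condition (\ref{eqn2.6}), the product $\mu\varphi$ is again entire, and the Leibniz rule together with the multiplicativity of the polynomial weight $(1+|\xi|^k)$ yields an estimate $\|\mu\varphi\|_k\lesssim\|\mu\|_k\,\|\varphi\|_k$ for each seminorm index $k$. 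Hence $\mu\varphi\in\mathcal{U}(\mathbb{C}^n)$. From this the advertised supremum bound follows at once: choosing any integer $k\ge p\beta$, the estimate $|\mu\varphi(\xi)|\le\|\mu\varphi\|_k/(1+|\xi|^k)$ on the strip $|\mathrm{Im}\,\xi|<k$ gives
\[
\sup_{|\mathrm{Im}\,\xi|<p\beta}(1+|\xi|^{p\beta})\,|\mu\varphi(\xi)|^p\le\sup_{\xi}\frac{(1+|\xi|^{p\beta})\,\|\mu\varphi\|_k^{\,p}}{(1+|\xi|^k)^p}<\infty,
\]
since $p\beta\le kp$ forces the denominator to dominate. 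Membership $\mu\varphi\in L^p(\Omega)$ is immediate from $\|\mu\varphi\|_{L^p(\Omega)}\le\|\varphi\|_{L^\infty(\Omega)}\|\mu\|_{L^p(\Omega)}$, the sup-norm of $\varphi$ being finite because $\varphi\in\mathcal{U}$.

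The substantive step is to bound the weighted seminorm $[\mu\varphi]_{\widetilde{\mathcal{W}}_{\mathcal{U}}^{\beta,p}(\Omega)}$. Here I would use the decomposition
\[
(\mu\varphi)(\xi)-(\mu\varphi)(\eta)=\varphi(\xi)\bigl(\mu(\xi)-\mu(\eta)\bigr)+\mu(\eta)\bigl(\varphi(\xi)-\varphi(\eta)\bigr),
\]
together with the convexity inequality $|a+b|^p\le 2^{p-1}(|a|^p+|b|^p)$, to split the double integral into two pieces. The first piece is dominated by $\|\varphi\|_{L^\infty}^p\,[\mu]_{\widetilde{\mathcal{W}}_{\mathcal{U}}^{\beta,p}(\Omega)}^p$, which is finite by hypothesis. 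For the second piece I would partition $\Omega\times\Omega$ according to $|\xi-\eta|\le 1$ and $|\xi-\eta|>1$. On the far region $|\xi-\eta|>1$ the kernel $|\xi-\eta|^{-(n+p\beta)}$ is integrable and $|\varphi(\xi)-\varphi(\eta)|^p\le 2^p\|\varphi\|_{L^\infty}^p$, so the piece reduces to controlling an integral against $(1+|\xi|^{n+p\beta})|\mu(\eta)|^p$ over a bounded-kernel region. On the near region $|\xi-\eta|\le 1$ I would use that $\varphi\in\mathcal{U}\subset C^\infty$ has bounded first derivatives, hence is Lipschitz, so $|\varphi(\xi)-\varphi(\eta)|\le L|\xi-\eta|$ and
\[
\frac{|\varphi(\xi)-\varphi(\eta)|^p}{|\xi-\eta|^{n+p\beta}}\le\frac{L^p}{|\xi-\eta|^{\,n-p(1-\beta)}};
\]
because $0<\beta<1$ makes the exponent strictly less than $n$, this singularity is integrable near the diagonal, and the remaining integration against $(1+|\xi|^{n+p\beta})|\mu(\eta)|^p$ converges.

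The hardest point is the coupling in the second piece between the weight $(1+|\xi|^{n+p\beta})$, carried by the $\xi$-variable, and the factor $|\mu(\eta)|^p$, carried by $\eta$; decoupling them requires transferring decay from $\varphi$ onto the weight before the $\eta$-integration is performed. This is precisely where the ultradifferentiable (rather than merely $C^\infty$) nature of $\varphi$ is indispensable: the slow-growth seminorms (\ref{eqn2.6}) guarantee decay faster than any power of $|\xi|$, which is exactly what is needed to offset the polynomial weight. I expect this transfer-of-decay argument, rather than the routine Lipschitz and integrability estimates, to be the main obstacle; the case $p=\infty$ is handled analogously, replacing the integrals by the essential supremum in the seminorm.
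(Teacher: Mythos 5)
Your proposal follows essentially the same route as the paper's proof: the Leibniz-type splitting $(\mu\varphi)(\xi)-(\mu\varphi)(\eta)=\varphi(\xi)\big(\mu(\xi)-\mu(\eta)\big)+\mu(\eta)\big(\varphi(\xi)-\varphi(\eta)\big)$, with the first piece absorbed by the boundedness of the weighted $\varphi$ against the seminorm of $\mu$, and the second controlled by a Lipschitz bound on $\varphi$ together with the integrability of $|\xi-\eta|^{p-n-p\beta}$ near the diagonal. Your version is in fact the more careful of the two: the paper compresses the second piece into a single estimate ending in $Mt^{p(1-\beta)}\Vert\mu\Vert_{L^{\Omega}}$ with $t=\sup_{\xi,\eta\in\Omega}\Vert\xi-\eta\Vert$, thereby implicitly assuming $\Omega$ bounded and omitting both the far-from-diagonal region and the weight-versus-decay bookkeeping that you rightly single out as the delicate point.
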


\begin{proof}
Let $\mu\in \widetilde{\mathcal{W}}_{\mathcal{U}}^{\beta,p}(\Omega)$ and $\varphi\in \mathcal{U}(\Omega)$. Clearly we have that $\mu\varphi\in L^{p}$ since $\mathcal{U}\subset L^{p}$. We show that
\[\sup_{\substack{|\text{Im}(\xi)|<p\beta\\ \xi,\eta\in\mathbb{C}}} \int_{\Omega}\int_{\Omega}\frac{(1+|\xi|^{n+p\beta})|\mu(\xi)-\mu(\eta)|^{p}}{|\xi-\eta|^{n+p\beta}}d\xi d\eta<\infty\]
Since $\displaystyle \mathcal{U}\ni \varphi\mapsto (1+|\xi|^{n+p\beta})|\hat{\varphi}(\xi)|^{p}$ defines a tempered ultradistribution in $\widetilde{\mathcal{W}}_{\mathcal{U}}^{\beta,p}(\Omega)$ for $|\beta|\leq k$ and for which $\beta\in (0,1)$, and note that $\displaystyle (1+|\xi|^{n+p\beta})\varphi(\xi)[\mu(\xi)-\mu(\eta)]$ determines a convergent integral since $(1+|\xi|^{n+p\beta})\varphi(\xi)$ is bounded or finite.\ \\
Then it follows that
\begin{align*}
\int_{\Omega}\int_{\Omega}\Big|\frac{(1+|\xi|^{n+p\beta})\Big(\varphi(\xi)-\varphi(\eta)\Big)\mu(\eta)}{|\xi-\eta|^{n+p\beta}}\Big|^{p}d\xi d\eta & \leq\Vert\varphi''\Vert_{\infty}^{p} \sup_{\substack{|\text{Im}(\xi)|<p\beta\\ \xi,\eta\in\mathbb{C}}} \int_{\Omega}(1+|\xi|^{n+p\beta})|\mu(\eta)|^{p}\\
&\Big[\int_{|\text{Im}(\xi)|<p\beta}|\xi-\eta|^{(-n-p\beta)+p}d\xi\Big]d\eta\\
& \leq Mt^{p(1-\beta)}\Vert\mu\Vert_{L^{\Omega}}
\end{align*}
where $\varphi''=\varphi(\xi)-\varphi(\eta)$ and $\displaystyle \sup_{\xi,\eta\in\Omega}\Vert\xi-\eta\Vert$.
\end{proof}

\section{Some Results on Density, Extension, and Embeddings in $\widetilde{\mathcal{W}}_{\mathcal{U}}^{\beta,p}(\mathbb{C})$}

We begin with some results on density, and we consider the extension of ultradistributions of slow growth in $\widetilde{\mathcal{W}}_{\mathcal{U}}^{\beta,p}(\mathbb{C})$ with compact support. The relationship between the properties of density, extension and embedding will be presented.

\begin{Lem}
\label{lem3.10}
Let $(c,d)\in\Omega\subset\mathbb{C}^{n}$, $\beta>0$, $1\leq p<\infty$. Suppose $\mu\in \widetilde{\mathcal{W}}_{\mathcal{U}}^{\beta,p}(\Omega)$ and $\text{supp}(\mu)=K\subset\Omega$ then the extension $\tilde{\mu}$ belongs to $\widetilde{\mathcal{W}}_{\mathcal{U}}^{\beta,p}(\Omega)$ and there is a constant $M=M(\beta,p,K)>0$ such that the following condition is verified:
\begin{eqnarray}
\label{eqn3.10}
\Vert\tilde{\mu}\Vert_{\widetilde{\mathcal{W}}_{\mathcal{U}}^{\beta,p}(\Omega)}\leq M\Vert\mu\Vert_{\widetilde{\mathcal{W}}_{\mathcal{U}}^{\beta,p}(\Omega)}
\end{eqnarray}
\end{Lem}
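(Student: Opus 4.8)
The plan is to take $\tilde\mu$ to be the extension of $\mu$ by zero from $\Omega$ to all of $\mathbb{C}^n$, and to control its seminorm by splitting the Gagliardo-type double integral according to whether the integration variables lie in $\Omega$ or in its complement. Because $\tilde\mu$ agrees with $\mu$ on $\Omega$ and vanishes identically outside, the $L^p$ part is immediate, $\Vert\tilde\mu\Vert_{L^p}=\Vert\mu\Vert_{L^p}$, so only the seminorm requires estimation. Writing the product domain as the union of $\Omega\times\Omega$, the region $(\mathbb{C}^n\setminus\Omega)\times(\mathbb{C}^n\setminus\Omega)$, and the two symmetric cross regions $\Omega\times(\mathbb{C}^n\setminus\Omega)$ and $(\mathbb{C}^n\setminus\Omega)\times\Omega$, I first observe that on $\Omega\times\Omega$ the integrand coincides with that of $\mu$, so this piece equals $[\mu]_{\widetilde{\mathcal{W}}_{\mathcal{U}}^{\beta,p}(\Omega)}^{p}$, while the piece over $(\mathbb{C}^n\setminus\Omega)\times(\mathbb{C}^n\setminus\Omega)$ vanishes since $\tilde\mu\equiv 0$ there.

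The decisive step, and the one I expect to be the main obstacle, is the estimation of the cross regions. By symmetry it suffices to treat $\Omega\times(\mathbb{C}^n\setminus\Omega)$, on which $\tilde\mu(\eta)=0$ and $\mu(\xi)$ is nonzero only for $\xi\in K=\operatorname{supp}(\mu)$; hence the cross term reduces to $\int_K\int_{\mathbb{C}^n\setminus\Omega}(1+|\xi|^{n+p\beta})|\mu(\xi)|^p|\xi-\eta|^{-(n+p\beta)}\,d\eta\,d\xi$. The essential geometric fact is that $K$ is compact and contained in the open set $\Omega$, so $d:=\operatorname{dist}(K,\mathbb{C}^n\setminus\Omega)>0$ and therefore $|\xi-\eta|\ge d$ for all $\xi\in K$ and $\eta\notin\Omega$. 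This keeps the kernel away from its diagonal singularity, and passing to polar coordinates gives $\int_{|\xi-\eta|\ge d}|\xi-\eta|^{-(n+p\beta)}\,d\eta=c_n\,d^{-p\beta}/(p\beta)<\infty$, the convergence at infinity being guaranteed by $\beta>0$.

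It remains to collect the constants. The weight $1+|\xi|^{n+p\beta}$ is bounded on the compact set $K$ by a constant depending only on $K$ (this is precisely the type of growth control encoded in condition (\ref{eqn2.6})), so the cross term is dominated by $C(\beta,p,K)\Vert\mu\Vert_{L^p(K)}^p\le C(\beta,p,K)\Vert\mu\Vert_{\widetilde{\mathcal{W}}_{\mathcal{U}}^{\beta,p}(\Omega)}^{p}$. Adding the three contributions, bounding the $\Omega\times\Omega$ piece by $\Vert\mu\Vert_{\widetilde{\mathcal{W}}_{\mathcal{U}}^{\beta,p}(\Omega)}^{p}$, and taking $p$-th roots yields the claimed inequality $\Vert\tilde\mu\Vert_{\widetilde{\mathcal{W}}_{\mathcal{U}}^{\beta,p}(\Omega)}\le M\Vert\mu\Vert_{\widetilde{\mathcal{W}}_{\mathcal{U}}^{\beta,p}(\Omega)}$ with $M=M(\beta,p,K)$ absorbing $c_n$, $p\beta$, $d^{-p\beta}$ and the bound on the weight over $K$. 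The only point needing genuine care is the positivity of $d$: were the support merely contained in $\Omega$ without compactness, the kernel singularity would survive in the cross region and the inner integral would diverge, so the compactness of $K$ is used in an essential way.
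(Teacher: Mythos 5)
Your overall strategy (extend by zero, split the double integral over $\Omega\times\Omega$, $(\mathbb{C}^{n}\setminus\Omega)\times(\mathbb{C}^{n}\setminus\Omega)$ and the two cross regions, and use $d=\operatorname{dist}(K,\mathbb{C}^{n}\setminus\Omega)>0$ to tame the kernel) is the standard argument for zero-extension of compactly supported elements of a Gagliardo-type space, and it is considerably more transparent than the paper's own proof, which instead approximates $\mu$ by a sequence $\tilde{\mu}_{i}$, estimates $\Vert D^{\beta}\tilde{\mu}_{i}\Vert_{L^{p}}$ by splitting an interval $(c,d)$ from $(d,\infty)$, and then runs a Cauchy-sequence limit argument. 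However, there is one genuine gap in your argument: the step ``by symmetry it suffices to treat $\Omega\times(\mathbb{C}^{n}\setminus\Omega)$'' is false for this seminorm, because the weight $(1+|\xi|^{n+p\beta})$ in $[\,\cdot\,]_{\widetilde{\mathcal{W}}_{\mathcal{U}}^{\beta,p}}$ is attached to the \emph{first} variable only, so the integrand is not symmetric under $\xi\leftrightarrow\eta$. On the region you actually estimate, $\xi$ lies in the compact set $K$ and the weight is harmless, exactly as you say. But on the other cross region $(\mathbb{C}^{n}\setminus\Omega)\times\Omega$ one has $\tilde{\mu}(\xi)=0$, $\eta\in K$, and the term becomes $\int_{\mathbb{C}^{n}\setminus\Omega}\int_{K}(1+|\xi|^{n+p\beta})\,|\mu(\eta)|^{p}\,|\xi-\eta|^{-(n+p\beta)}\,d\eta\,d\xi$; for $|\xi|$ large and $\eta$ in the bounded set $K$ the factor $(1+|\xi|^{n+p\beta})|\xi-\eta|^{-(n+p\beta)}$ tends to a positive constant, so the $\xi$-integral over the unbounded complement diverges whenever $\mu\not\equiv 0$. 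As written, your proof therefore establishes finiteness of only one of the two cross terms, and the other is actually infinite; the conclusion cannot be rescued without either symmetrizing the weight (replacing $1+|\xi|^{n+p\beta}$ by something controlled in both variables), assuming $\Omega$ bounded and measuring the extension only over $\Omega\times\Omega$ (which is what the statement's ``$\tilde{\mu}\in\widetilde{\mathcal{W}}_{\mathcal{U}}^{\beta,p}(\Omega)$'' literally permits, but then the lemma is vacuous), or imposing decay on the weight outside $K$. The paper's own proof silently evaluates the weight only at points of the support and never confronts this term, so the gap is inherited from the source rather than introduced by you, but it is a gap nonetheless.

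A secondary, smaller point: your polar-coordinate computation $\int_{|\xi-\eta|\ge d}|\xi-\eta|^{-(n+p\beta)}\,d\eta=c_{n}d^{-p\beta}/(p\beta)$ presumes the underlying real dimension is $n$. If $\Omega\subset\mathbb{C}^{n}$ is taken literally as a $2n$-real-dimensional domain, that integral converges only when $n+p\beta>2n$, i.e.\ $p\beta>n$, and the constant changes accordingly. The paper is itself inconsistent about $\mathbb{C}$, $\mathbb{C}^{n}$ and $\mathbb{R}^{n}$, so this is more a matter of stating your convention than an error, but you should say explicitly in which dimension the radial integration is performed. Your closing observation that the compactness of $K$ (hence $d>0$) is essential is correct and is the right thing to emphasize.
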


\begin{proof}
We need prove that the inequality (\ref{eqn3.10}) is valid for the extension $\tilde{\mu}$. To do this, let $\lbrace\tilde{\mu}_{i}\rbrace$ be a sequence of tempered ultradistributions in $\widetilde{\mathcal{W}}_{\mathcal{U}}^{\beta,p}(\Omega)$ such that $\displaystyle\lbrace\tilde{\mu}_{i}\rbrace\mapsto (1+|\xi|^{n+p\beta})\mu_{i}\longrightarrow (1+|\xi|^{n+\beta p})\mu$ in $C_{0}^{\infty}(\Omega)$ as $i\to\infty$. We can define the sequence
\[\tilde{\mu}_{i}(\xi)=\begin{cases}
(1+|\xi|^{n+p\beta})\mu_{i}(\xi), & xi\in\Omega\\
0, & \xi\notin\mathbb{C}\setminus\Omega
\end{cases}
\]
Therefore, let $\displaystyle \text{supp}(\Omega)\subset K\subset \Omega$ so that $\displaystyle\sup_{|\text{Im}(\xi)|<p\beta}\Big\{(1+|\xi|^{n+p\beta})\mu_{i}(\xi)\Big\}<\infty$ and we take the ultra-derivative as follows
\begin{align*}
\Vert D^{\beta}\tilde{\mu}_{i}\Vert_{L^{p}(\mathbb{C})} &= \Vert D^{\beta}(1+|\xi|^{n+p\beta})\tilde{\mu}_{i}\Vert_{L^{p}(c,d)}+\Vert L(1+|\xi|^{n+p\beta})\tilde{\mu}_{i}\Vert_{L^{p}(d,\infty)}\\
   &=\Vert D^{\beta}(1+|\xi|^{n+p\beta})\tilde{\mu}_{i}\Vert_{L^{p}(c,d)}+\int_{d}^{\infty}\Big|\int_{e}^{f}\frac{(1+|\xi|^{n+p\beta})\mu_{i}(\xi)}{(\xi-\eta)^{n+p\beta}}d\xi\Big|d\eta\\
   &\leq \Vert D^{\beta}(1+|\xi|^{n+p\beta})\tilde{\mu}_{i}\Vert_{L^{p}(c,d)}+\Vert \tilde{\mu}_{i}\Vert_{L^{p}(c,d)}\cdot \frac{(1+|d|^{n+p\beta+1})(d-c)^{\frac{p}{p'}}}{(f-e)^{p(n+\beta)-1}}
\end{align*}
Then there is a constant $M(\beta,p,K)>0$ such that
\[\Vert D^{\beta}\tilde{\mu}_{i}\Vert_{L^{p}(\mathbb{C})}\leq M\Vert\mu_{i}\Vert_{\widetilde{\mathcal{W}}_{\mathcal{U}}^{\beta,p}(\Omega)}\]
We see that $\displaystyle \Vert\mu_{i}\Vert_{\widetilde{\mathcal{W}}_{\mathcal{U}}^{\beta,p}(\Omega)}\longrightarrow\Vert\mu\Vert_{\widetilde{\mathcal{W}}_{\mathcal{U}}^{\beta,p}(\Omega)}$ and $\displaystyle \Vert\tilde{\mu}_{i}\Vert_{\widetilde{\mathcal{W}}_{\mathcal{U}}^{\beta,p}(\Omega)}<\varepsilon$ for all $i>N(\varepsilon)$.\ \\
suppose $\varepsilon>0$ is given and for sufficiently large $n$, we have
\[\Vert\tilde{\mu}_{m}-\tilde{\mu}_{n}\Vert_{\widetilde{\mathcal{W}}_{\mathcal{U}}^{\beta,p}(\Omega)}\leq \Vert\mu_{m}-\mu_{n}\Vert_{\widetilde{\mathcal{W}}_{\mathcal{U}}^{\beta,p}(\Omega)}<\varepsilon \]
This shows that $\lbrace\tilde{\mu}\rbrace$ is Cauchy. Then for $m\to\infty$, we see that
\[\Vert\tilde{\mu}_{n}-\mu\Vert_{\widetilde{\mathcal{W}}_{\mathcal{U}}^{\beta,p}(\Omega)}<\varepsilon\]
such that $\mu_{i}\to\upsilon$ in $\widetilde{\mathcal{W}}_{\mathcal{U}}^{\beta,p}(\mathbb{C})$. That is, for sufficiently large $i$, we have
\begin{align}
\label{eqn**}
\Vert\tilde{\mu}-\upsilon\Vert_{L^{p}(\mathbb{C})} &=\Vert\tilde{\mu}-\tilde{\mu}_{i}+\tilde{\mu}_{i}-\upsilon\Vert_{L^{p}(\mathbb{C})}\nonumber\\
   &\leq \Vert\tilde{\mu}-\tilde{\mu}_{i}\Vert_{L^{p}(\mathbb{C})}+\Vert\tilde{\mu}_{i}-\upsilon\Vert_{L^{p}(\mathbb{C})}\nonumber\\
   &<\varepsilon
\end{align}
Thus (\ref{eqn**}) occurs when $\displaystyle \Vert\tilde{\mu}-\upsilon\Vert_{L^{p}(\mathbb{C})}=0$. This shows that $\tilde{\mu}=\upsilon$ almost everywhere and this shows that the extension $\tilde{\mu}$ satisfies the desired conditions on compactly supported function.
\end{proof}

The next result called the density result establishes the extension from the domain $\Omega$ to the whole of $\mathbb{C}^n$.

\begin{Prop}
\label{prop3.6}
The space $\mathcal{D}$ is dense in $\widetilde{\mathcal{W}}_{\mathcal{U}}^{\beta,p}(\mathbb{C}^{n})$. In particular, $\mathcal{U}(\mathbb{C}^{n})$ is dense in $\widetilde{\mathcal{W}}_{\mathcal{U}}^{\beta,p}(\mathbb{C}^{n})$.
\end{Prop}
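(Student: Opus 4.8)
The plan is to establish density of $\mathcal{D}$ by the classical two-step approximation scheme — first a truncation (cutoff) argument reducing an arbitrary $\mu\in\widetilde{\mathcal{W}}_{\mathcal{U}}^{\beta,p}(\mathbb{C}^{n})$ to a compactly supported element, then a mollification argument replacing that element by a smooth, compactly supported one lying in $\mathcal{D}$. Once density of $\mathcal{D}$ is in hand, the ``in particular'' assertion is immediate: every $\varphi\in\mathcal{D}$ is smooth with compact support, hence satisfies the rapid-decay condition (\ref{eqn2.5}) trivially, so $\mathcal{D}\subset\mathcal{U}(\mathbb{C}^{n})$; since a subspace containing a dense subspace is itself dense, $\mathcal{U}(\mathbb{C}^{n})$ is then dense as well.

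For the truncation step I would fix a cutoff $\chi\in C_{0}^{\infty}(\mathbb{C}^{n})$ with $\chi\equiv 1$ on the unit ball and $\operatorname{supp}(\chi)$ contained in the ball of radius $2$, and set $\chi_{R}(\xi)=\chi(\xi/R)$ and $\mu_{R}=\chi_{R}\mu$. Dominated convergence gives $\mu_{R}\to\mu$ in $L^{p}(\Omega)$, so it remains to control the weighted Gagliardo seminorm of $\mu-\mu_{R}=(1-\chi_{R})\mu$. Writing the difference as
\[((1-\chi_{R})\mu)(\xi)-((1-\chi_{R})\mu)(\eta)=(1-\chi_{R}(\xi))\big(\mu(\xi)-\mu(\eta)\big)+\mu(\eta)\big(\chi_{R}(\eta)-\chi_{R}(\xi)\big),\]
the first term is supported away from the ball of radius $R$ in the first variable and tends to $0$ by dominated convergence against the finite seminorm $[\mu]$; the second, a commutator term, is handled using the two bounds $|\chi_{R}(\xi)-\chi_{R}(\eta)|\leq CR^{-1}|\xi-\eta|$ and $|\chi_{R}(\xi)-\chi_{R}(\eta)|\leq 2$, which combine with the finiteness of the weighted norm of $\mu$ to give a contribution vanishing as $R\to\infty$. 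This reduces the problem to compactly supported $\mu$.

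For the mollification step, given a compactly supported $\mu\in\widetilde{\mathcal{W}}_{\mathcal{U}}^{\beta,p}(\Omega)$ I would convolve with a standard mollifier as in Lemma \ref{lem2.15}, producing $\mu_{\varepsilon}$. The functions $\mu_{\varepsilon}$ are smooth with compact support, hence lie in $\mathcal{D}$, and by Lemma \ref{lem2.15} adapted to the weighted seminorm — using that multiplication by the slowly increasing weight $(1+|\xi|^{n+p\beta})$ preserves the class via the local-type property of Proposition \ref{prop3.5} — one obtains $\Vert\mu_{\varepsilon}-\mu\Vert_{\widetilde{\mathcal{W}}_{\mathcal{U}}^{\beta,p}(\Omega)}\to 0$ as $\varepsilon\to 0$. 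Combining the two steps by a diagonal choice of the parameters $R$ and $\varepsilon$ yields, for any prescribed tolerance, an element of $\mathcal{D}$ within that tolerance of the given $\mu$, which establishes the density; this is the exact analogue, within the present weighted framework, of the density statement of Theorem \ref{theo2.18}.

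The step I expect to be the main obstacle is the commutator estimate inside the truncation argument. Because the weight $(1+|\xi|^{n+p\beta})$ grows at infinity, one must verify that the cutoff-induced term $\mu(\eta)(\chi_{R}(\eta)-\chi_{R}(\xi))$, integrated against the singular kernel $|\xi-\eta|^{-(n+p\beta)}$ and the growing weight, genuinely decays; the delicate point is matching the $R^{-1}$ gain from the Lipschitz bound on $\chi_{R}$ against the growth of the weight on the annulus of radii comparable to $R$ where $\chi_{R}$ varies. I would split the $\xi$-integral into the region $|\xi-\eta|\leq 1$, where the Lipschitz bound tames the kernel singularity, and the region $|\xi-\eta|>1$, where the uniform bound together with integrability of the kernel applies, and check in each that the weight factor is absorbed by the finiteness of $[\mu]$.
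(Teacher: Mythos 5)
Your route is genuinely different from the paper's. The paper's proof works with Definition~\ref{def3.1}, where the seminorm is the weighted $L^{p}$ norm of the Fourier transform, $\int_{\mathbb{C}^{n}}(1+|\xi|^{n+p\beta})|\hat{\mu}(\xi)|^{p}\,d\xi$: it invokes the density of $\mathcal{D}$ in $L^{p}$ to approximate the weighted transform by test functions and then pulls the approximants back through the inverse Fourier transform. You instead work with the Gagliardo-type double integral of Definition~\ref{def3.2} and run the classical truncation-plus-mollification scheme of Theorem~\ref{theo2.18} and Lemma~\ref{lem2.15}. In the unweighted setting your scheme is the standard and more robust one; the paper's is shorter but leans on continuity of the inverse Fourier transform on $L^{p}$ (unavailable for $p\neq 2$) and on an unexplained passage from $\varphi_{n}\to(1+|\xi|^{n+p\beta})\hat{\mu}$ to $\nu_{n}=(1+|\xi|^{n+p\beta})\varphi_{n}\to\hat{\mu}$.

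The obstacle you flag at the end, however, is not merely delicate: under Definition~\ref{def3.2} the truncation step fails outright. Take any nonzero $\mu$ and any $\eta$ in a set of positive measure where $|\mu(\eta)|\geq c>0$ and $\chi_{R}(\eta)=1$. For $|\xi|\geq 2R$ one has $\chi_{R}(\xi)=0$, so the commutator term contains
\[
c^{p}\int_{|\xi|\geq 2R}\frac{1+|\xi|^{n+p\beta}}{|\xi-\eta|^{n+p\beta}}\,d\xi\;=\;+\infty,
\]
because the integrand tends to a positive constant as $|\xi|\to\infty$ rather than decaying. The same computation shows that the weighted Gagliardo seminorm over all of $\mathbb{C}^{n}$ of \emph{any} nonzero compactly supported function is infinite, so under Definition~\ref{def3.2} the space contains no nonzero element of $\mathcal{D}$ at all and the density statement cannot be proved (or even be true) in that formulation. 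No refinement of the Lipschitz-versus-growth bookkeeping on the annulus $R\leq|\xi|\leq 2R$ can repair this; the problem is the far-field region $|\xi-\eta|\gg 1$, where the weight in the numerator exactly cancels the kernel. The only reading under which the proposition is meaningful is the Fourier-side Definition~\ref{def3.1}, which is the one the paper's proof (for all its own defects) actually uses, and which your argument never engages.

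A smaller caveat: your ``in particular'' step rests on $\mathcal{D}\subset\mathcal{U}(\mathbb{C}^{n})$, which holds under the reading of Definition~\ref{def2.5} you cite but fails under Definition~\ref{def2.6}, where elements of $\mathcal{U}$ are entire and hence never compactly supported unless zero. Since the paper uses both definitions interchangeably, you should state explicitly which one you are assuming before drawing that inclusion.
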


\begin{proof}
We define an ultra-differentiable function $\varphi$ in $\widetilde{\mathcal{W}}_{\mathcal{U}}^{\beta,p}(\mathbb{C}^{n})$ such that the sequence $\varphi_{n}$ in $\mathcal{U}(\mathbb{C}^{n})$ converges to $\varphi$. To see this, the density of $\mathcal{D}$ in $L^p$ implies that there is existence of sequence $\lbrace\varphi_{n}\rbrace\in\mathcal{D}(\mathbb{C})$ such that
\[\lim_{n\to\infty}\Vert\varphi_{n}-(1+|\xi|^{n+p\beta})\hat{\mu}\Vert_{p}=0\]
Thus the sequence of functions $\nu_{n}=(1+|\xi|^{n+p\beta})\varphi_{n}$ belongs to $\mathcal{D}$ and automatically converges to $\hat{\mu}$ in $L^p$. By the continuity of the inverse Fourier transform, we conclude that $\breve{\nu}_{n}$ is in $\mathcal{U}(\mathbb{C}^{n})$ and hence converges to $\mu$ in $\widetilde{\mathcal{W}}_{\mathcal{U}}^{\beta,p}(\mathbb{C}^{n})$ provided $\displaystyle \sup_{\substack{ \xi\in\mathbb{C}\\  |\text{Im}(\xi)|<p\beta}}(1+|\xi|^{n+p\beta})|\hat{\mu}(\xi)|<\infty$.
\end{proof}

\begin{Rem}
From Lemma \ref{lem3.10}, there is a number associated with the extension $\tilde{\mu}$ given by
\[M=\sup_{\xi\in\Omega}\Bigg(\frac{\Vert\mu\Vert_{\widetilde{\mathcal{W}}_{\mathcal{U}}^{\beta,p}(\Omega)}}{\Vert\mu\Vert_{\widetilde{\mathcal{W}}_{\mathcal{U}}^{\beta,p}(\Omega)}}\Bigg)\]
It follows that $\tilde{\mu}$ is bounded by a number $M$. Similarly fractional Sobolev tempered ultradistribution $\mu$ admit a bounded inverse extension $\mu\mapsto\tilde{\mu}$ as presented in the following result.
\end{Rem}

\begin{Th}
\label{theo3.12}
Let $\Omega\subset\mathbb{C}$, $0<\beta<1$, and $p\in [1,\infty)$. Then the extension $\mu\mapsto\tilde{\mu}$ admits a bounded inverse if there is a constant $M=M(\beta,p,K)>0$ such that
\[M\Vert\mu\Vert_{\widetilde{\mathcal{W}}_{\mathcal{U}}^{\beta,p}(\Omega)}\leq \Vert\tilde{\mu}\Vert_{\widetilde{\mathcal{W}}_{\mathcal{U}}^{\beta,p}(\Omega)}\]
where $K=\text{supp}(\mu)\subset\Omega$.
\end{Th}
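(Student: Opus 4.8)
The plan is to recognize this as the standard Banach-space principle that a bounded linear operator which is \emph{bounded below} admits a bounded inverse on its range, and to identify the restriction map as the explicit inverse of the extension. By Proposition \ref{prop3.4} the space $\widetilde{\mathcal{W}}_{\mathcal{U}}^{\beta,p}(\Omega)$ is Banach, and Lemma \ref{lem3.10} already supplies the upper estimate $\Vert\tilde{\mu}\Vert_{\widetilde{\mathcal{W}}_{\mathcal{U}}^{\beta,p}(\Omega)} \leq M\Vert\mu\Vert_{\widetilde{\mathcal{W}}_{\mathcal{U}}^{\beta,p}(\Omega)}$, so the extension operator $E : \mu \mapsto \tilde{\mu}$ is a bounded linear map and the setting is the appropriate one for the principle to apply.

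First I would introduce the restriction operator $R : \tilde{\mu} \mapsto \tilde{\mu}|_{\Omega}$ as the natural candidate for the inverse. Since $\tilde{\mu}$ is by construction the zero-extension of $\mu$ off $\Omega$ (exactly as in Lemma \ref{lem3.10}), the two functions coincide on $\Omega$, whence $R\tilde{\mu} = \mu$ and therefore $R \circ E = \mathrm{Id}$ on $\widetilde{\mathcal{W}}_{\mathcal{U}}^{\beta,p}(\Omega)$. This exhibits $R$ as a left inverse of $E$ and, on the range of $E$, as a genuine two-sided inverse. Next I would feed in the hypothesis $M\Vert\mu\Vert \leq \Vert\tilde{\mu}\Vert$; rewriting it as
\[
\Vert R\tilde{\mu}\Vert_{\widetilde{\mathcal{W}}_{\mathcal{U}}^{\beta,p}(\Omega)} = \Vert\mu\Vert_{\widetilde{\mathcal{W}}_{\mathcal{U}}^{\beta,p}(\Omega)} \leq \tfrac{1}{M}\Vert\tilde{\mu}\Vert_{\widetilde{\mathcal{W}}_{\mathcal{U}}^{\beta,p}(\Omega)}
\]
shows that $R$ is bounded on the range of $E$ with operator norm at most $1/M$, which is precisely the assertion that $E$ admits a bounded inverse. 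The same lower bound yields injectivity of $E$, since $E\mu = 0$ forces $M\Vert\mu\Vert \leq 0$ and hence $\mu = 0$.

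The one point that genuinely requires care, and which I regard as the main obstacle, is verifying that the range of $E$ is closed, so that the inverse is defined on an honest Banach space rather than on a merely dense subspace. I would deduce this once more from the lower bound: if $\tilde{\mu}_{n}$ lies in the range and $\tilde{\mu}_{n} \to \nu$, then the estimate $\Vert\mu_{n} - \mu_{m}\Vert \leq \tfrac{1}{M}\Vert\tilde{\mu}_{n} - \tilde{\mu}_{m}\Vert$ shows $\{\mu_{n}\}$ is Cauchy in $\widetilde{\mathcal{W}}_{\mathcal{U}}^{\beta,p}(\Omega)$, hence $\mu_{n} \to \mu$ for some limit $\mu$, and the continuity of $E$ gives $\tilde{\mu}_{n} = E\mu_{n} \to E\mu$, so $\nu = E\mu$ indeed belongs to the range. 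A secondary subtlety is the reconciliation of the domain and target norms, which are written here over the same symbol $\Omega$: one must read the target norm as taken over the ambient space $\mathbb{C}$ into which $\tilde{\mu}$ extends, and confirm that the zero-extension does not inflate the associated Gagliardo-type seminorm beyond the constant furnished by Lemma \ref{lem3.10}.
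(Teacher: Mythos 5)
Your argument is essentially the paper's own: both identify the restriction map as the inverse of the extension $E:\mu\mapsto\tilde{\mu}$ and obtain its boundedness simply by dividing the hypothesis $M\Vert\mu\Vert_{\widetilde{\mathcal{W}}_{\mathcal{U}}^{\beta,p}(\Omega)}\leq \Vert\tilde{\mu}\Vert_{\widetilde{\mathcal{W}}_{\mathcal{U}}^{\beta,p}(\Omega)}$ by $M$, so the inverse has operator norm at most $1/M$. You additionally supply the injectivity of $E$ and the closedness of its range (via the lower bound and completeness from Proposition \ref{prop3.4}), details the paper's proof leaves implicit, but this does not change the route.
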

\begin{proof}
The extended function $\tilde{\mu}$ having satisfied the desired properties, we show that $\mu$ admit a bounded inverse. From the continuity of $\mu$ and $\tilde{\mu}$ in $\widetilde{\mathcal{W}}_{\mathcal{U}}^{\beta,p}(\Omega)$, then $\tilde{\mu}^{-1}\eta=\mu$ and we have $\displaystyle \mu\mapsto (1+|\xi|^{n+p\beta})\mu$ and
\[\Vert\tilde{\mu}^{-1}\eta\Vert_{\widetilde{\mathcal{W}}_{\mathcal{U}}^{\beta,p}(\Omega)}=\Vert\mu\Vert_{\widetilde{\mathcal{W}}_{\mathcal{U}}^{\beta,p}(\Omega)}\leq \frac{1}{M}\Vert\tilde{\mu}\Vert_{\widetilde{\mathcal{W}}_{\mathcal{U}}^{\beta,p}(\Omega)}\]
Then
\[\Vert\tilde{\mu}^{-1}\Vert_{\widetilde{\mathcal{W}}_{\mathcal{U}}^{\beta,p}(\Omega)}\leq \frac{1}{M}\Vert\tilde{\mu}\Vert_{\widetilde{\mathcal{W}}_{\mathcal{U}}^{\beta,p}(\Omega)}\]
By choosing $\frac{1}{M}=C$, we have
\[\Vert\tilde{\mu}^{-1}\Vert_{\widetilde{\mathcal{W}}_{\mathcal{U}}^{\beta,p}(\Omega)}\leq C\Vert\tilde{\mu}\Vert_{\widetilde{\mathcal{W}}_{\mathcal{U}}^{\beta,p}(\Omega)}\]
Hence the result.
\end{proof}

\begin{Cor}
\label{cor3.13}
All extension operators from finite fractional Sobolev space $\widetilde{\mathcal{W}}_{\mathcal{U}}^{\beta,p}(\Omega)$ into any arbitrary fractional Sobolev space is bounded.
\end{Cor}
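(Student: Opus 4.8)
The plan is to obtain this corollary as an immediate synthesis of the extension bound in Lemma \ref{lem3.10}, the density statement in Proposition \ref{prop3.6}, and the reverse bound in Theorem \ref{theo3.12}. First I would regard the extension map $E:\mu\mapsto\tilde{\mu}$ as a linear operator defined initially on the subspace of compactly supported elements of $\widetilde{\mathcal{W}}_{\mathcal{U}}^{\beta,p}(\Omega)$. Lemma \ref{lem3.10} already furnishes the key estimate $\Vert\tilde{\mu}\Vert_{\widetilde{\mathcal{W}}_{\mathcal{U}}^{\beta,p}(\Omega)}\leq M\Vert\mu\Vert_{\widetilde{\mathcal{W}}_{\mathcal{U}}^{\beta,p}(\Omega)}$ with $M=M(\beta,p,K)$ depending only on $\beta$, $p$ and the support $K$; this is precisely the assertion that $E$ is bounded on that subspace, with operator norm at most $M$.

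Next I would upgrade boundedness on the dense subspace to boundedness on the whole space. By Proposition \ref{prop3.6} the space $\mathcal{D}$, and hence the family of compactly supported tempered ultradistributions, is dense in $\widetilde{\mathcal{W}}_{\mathcal{U}}^{\beta,p}(\mathbb{C}^{n})$; since by Proposition \ref{prop3.4} the ambient space is a Banach space, the bounded linear transformation principle guarantees that $E$ extends uniquely to a bounded linear operator on all of $\widetilde{\mathcal{W}}_{\mathcal{U}}^{\beta,p}(\Omega)$, retaining the same bound $M$. Theorem \ref{theo3.12} then supplies the complementary inequality $M\Vert\mu\Vert_{\widetilde{\mathcal{W}}_{\mathcal{U}}^{\beta,p}(\Omega)}\leq\Vert\tilde{\mu}\Vert_{\widetilde{\mathcal{W}}_{\mathcal{U}}^{\beta,p}(\Omega)}$, so $E$ is in fact bounded below as well and realizes a topological embedding onto its image; boundedness of $E$ is the content needed for the corollary, while the reverse bound records the stronger isomorphism-onto-image property.

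To handle the phrase \emph{any arbitrary fractional Sobolev space}, I would observe that the estimate of Lemma \ref{lem3.10} uses only the defining seminorm through the slow-growth weight $(1+|\xi|^{n+p\beta})$ and the condition (\ref{eqn2.6}), and does not reference the particular realization of the target norm. Consequently, whenever the target is equipped with any of the norms of Definitions \ref{def3.1}--\ref{def3.3}, the same constant $M$ controls $E$, and the boundedness conclusion transfers verbatim to that target space.

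The main obstacle I anticipate is not the continuity argument itself, which is routine once the pieces are assembled, but the justification that the various admissible target norms are genuinely comparable, i.e. that the weighted seminorms arising in Definitions \ref{def3.1}, \ref{def3.2} and \ref{def3.3} are equivalent under the growth condition (\ref{eqn2.6}). Once this equivalence is recorded, the corollary collapses to the combination of the upper bound of Lemma \ref{lem3.10} with the density of Proposition \ref{prop3.6}, with Theorem \ref{theo3.12} providing the sharper statement that the extension is a bounded embedding.
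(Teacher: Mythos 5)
Your route is genuinely different from the paper's, and it also has a gap that would need to be closed before it proves the stated corollary. The paper reads \emph{finite} as \emph{finite-dimensional}: its proof picks a basis $\{e_i\}_{i=1}^{n}$ of the (assumed finite-dimensional) space, expands $\mu_i$ and $\mu$ in that basis, and passes the limit through $E$ by linearity --- in effect the standard fact that a linear operator defined on a finite-dimensional normed space is automatically bounded, which is the only reason the conclusion can be asserted for \emph{all} extension operators into \emph{any} target space. Your argument never uses finiteness at all and only treats the one concrete extension $\mu\mapsto\tilde{\mu}$ of Lemma \ref{lem3.10}; as written it aims at a different statement about that particular operator, not the corollary about arbitrary extension operators on a finite space.

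The concrete gap in your density argument is the dependence of the constant on the support. Lemma \ref{lem3.10} gives $\Vert\tilde{\mu}\Vert_{\widetilde{\mathcal{W}}_{\mathcal{U}}^{\beta,p}(\Omega)}\leq M(\beta,p,K)\,\Vert\mu\Vert_{\widetilde{\mathcal{W}}_{\mathcal{U}}^{\beta,p}(\Omega)}$ with $M$ depending on $K=\mathrm{supp}(\mu)$. To apply the bounded-linear-transformation principle you need a single constant valid on a dense subspace; but the compactly supported elements with a \emph{fixed} support $K$ do not form a dense subspace of $\widetilde{\mathcal{W}}_{\mathcal{U}}^{\beta,p}(\Omega)$ unless $\Omega$ is essentially exhausted by $K$, and over the union of all compact supports the constants $M(\beta,p,K)$ have no a priori uniform bound. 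You would have to prove $\sup_{K}M(\beta,p,K)<\infty$ (or restrict to bounded $\Omega$), and neither step is in your sketch. Separately, your use of Theorem \ref{theo3.12} supplies a lower bound that is irrelevant to boundedness, and the equivalence of the norms of Definitions \ref{def3.1}--\ref{def3.3} that you flag as the main obstacle is indeed nowhere established in the paper --- but under the paper's finite-dimensional reading it is also unnecessary, since all norms on a finite-dimensional space are equivalent and any linear map out of such a space is bounded regardless of the norm on the target.
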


\begin{proof}
We show that a sequence of tempered ultradistributions in $\widetilde{\mathcal{W}}_{\mathcal{U}}^{\beta,p}(\Omega)$ converges. Let $\lbrace(1+|\xi|^{p\beta})\mu_{i}\rbrace$ be a sequence from a finite space $\widetilde{\mathcal{W}}_{\mathcal{U}}^{\beta,p}(\Omega)$. Since the space is finite, there exists a basis $\lbrace e_{i}\rbrace_{i=1}^{n}$ such that each member in $\widetilde{\mathcal{W}}_{\mathcal{U}}^{\beta,p}(\Omega)$ can be a linear combination of $e_i$ such that $\displaystyle\sup_{\xi\in\mathbb{C}}\Big\{(1+|\xi|^{p\beta})\mu_{i}\Big\}_{i=1}^{n}<\infty$ and $E:\mu\longrightarrow\tilde{\mu}$ an extension with
\[\mu_{i}(\xi)=\sum_{i=1}^{n}\beta_{i}^{(n)}e_{i}(1+|\xi_{i}|^{p\beta})\]
and
\[\mu(\xi)=\sum_{i=1}^{\infty}\beta_{i}e_{i}(1+|\xi_{i}|^{p\beta})\quad\text{for}\quad \beta_{i}\in (0,1)\]
Therefore
\begin{align*}
E(\mu_{i}(\xi))& =\tilde{\mu}_{i}(\xi)=E\Big(\sum_{i=1}^{n}\beta_{i}^{(n)}e_{i}(1+|\xi_{i}|^{p\beta})\Big)\\
   &=\sum_{i=1}^{n}\beta_{i}^{(n)}E\Big(e_{i}(1+|\xi_{i}|^{p\beta})\Big)\to \sum_{i=1}^{\infty}\beta_{i}E\Big(e_{i}(1+|\xi_{i}|^{p\beta})\Big)\\
   &=E\Big(\sum_{i=1}^{\infty}\beta_{i}e_{i}(1+|\xi_{i}|^{p\beta})\Big)\\
   &=E(\mu)
\end{align*}
This complete the proof.
\end{proof}

There is an essential need to have an interior and exterior extensions in $\widetilde{\mathcal{W}}_{\mathcal{U}}^{\beta,p}(\Omega)$. For any tempered ultradistribution $\mu\in \widetilde{\mathcal{W}}_{\mathcal{U}}^{\beta,p}(\Omega)$ and $\Omega'\subset\Omega$, there is a re-arrangement for $\mu$ in $\Omega\setminus\Omega'$ such that the slow growth rearranged function $\mu^{\ast}$ has a compact support in $\Omega$ and coincides with $\mu$ in $\Omega$.

\begin{Lem}
\label{lem3.14}
Suppose that $\Omega$ is bounded and $0<\beta<1$. For any $\Omega'\subset\Omega\subset\mathbb{C}$ there is a compact subset $K\subset\Omega$ and a constant $M(\beta,K)>0$ such that for every $\mu\in \widetilde{\mathcal{W}}_{\mathcal{U}}^{\beta,p}(\Omega)$ there is an extension $E:\widetilde{\mathcal{W}}_{\mathcal{U}}^{\beta,p}(\Omega)\longrightarrow\widetilde{\mathcal{W}}_{\mathcal{U}}^{\beta,p}(\mathbb{C})$ satisfying the following properties
\begin{itemize}
\item[(i)] $E\mu=\mu^{\ast}$ almost everywhere in $\Omega'$ and $\text{supp}(E\mu)\subseteq K$
\item[(ii)] $\displaystyle \Vert E\mu\Vert_{\widetilde{\mathcal{W}}_{\mathcal{U}}^{\beta,p}(\mathbb{C})}\leq M\Vert\mu\Vert_{\widetilde{\mathcal{W}}_{\mathcal{U}}^{\beta,p}(\Omega)}$.
\end{itemize}
\end{Lem}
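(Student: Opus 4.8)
The plan is to build $E\mu$ in two independent stages and then chain the resulting estimates. First I would \emph{localise} $\mu$ by multiplying it by a fixed cut-off, so as to replace it by a function supported in a compact subset of $\Omega$, and then I would \emph{extend by zero} using Lemma~\ref{lem3.10}. Concretely, since $\Omega$ is bounded and $\Omega'$ is relatively compact in $\Omega$, I would fix a compact set $K$ with $\overline{\Omega'}\subseteq\operatorname{int}K\subseteq K\subseteq\Omega$ and choose an ultra-differentiable cut-off $\chi\in\mathcal{U}(\Omega)$ with $0\le\chi\le 1$, $\chi\equiv 1$ on a neighbourhood of $\overline{\Omega'}$, and $\operatorname{supp}\chi\subseteq K$. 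Writing $L:=\|\nabla\chi\|_{\infty}<\infty$ (finite because $\chi$ is smooth and compactly supported), I then set $\mu^{\ast}:=\chi\mu$ and take $E\mu$ to be the zero-extension of $\mu^{\ast}$ to all of $\mathbb{C}$.

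With this construction the two properties follow quickly once the norm bound is in place. For (i), on $\Omega'$ we have $\chi\equiv 1$, so $E\mu=\mu^{\ast}=\mu$ almost everywhere there, while $\operatorname{supp}(E\mu)=\operatorname{supp}\mu^{\ast}\subseteq\operatorname{supp}\chi\subseteq K$, which is exactly the support requirement. For (ii), I would invoke two estimates in succession. Proposition~\ref{prop3.5} (the local-type property) gives $\mu^{\ast}\in\widetilde{\mathcal{W}}_{\mathcal{U}}^{\beta,p}(\Omega)$ together with a product bound
\[\|\mu^{\ast}\|_{\widetilde{\mathcal{W}}_{\mathcal{U}}^{\beta,p}(\Omega)}\le C_{1}\,\|\mu\|_{\widetilde{\mathcal{W}}_{\mathcal{U}}^{\beta,p}(\Omega)},\qquad C_{1}=C_{1}(\chi,\beta,p),\]
and Lemma~\ref{lem3.10}, applicable because $\operatorname{supp}\mu^{\ast}\subseteq K$ is compact, gives the extension bound
\[\|E\mu\|_{\widetilde{\mathcal{W}}_{\mathcal{U}}^{\beta,p}(\mathbb{C})}\le C_{2}\,\|\mu^{\ast}\|_{\widetilde{\mathcal{W}}_{\mathcal{U}}^{\beta,p}(\Omega)},\qquad C_{2}=C_{2}(\beta,p,K).\]
Composing the two and setting $M:=C_{1}C_{2}=M(\beta,p,K)$ yields precisely (ii).

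The hard part will be the product estimate feeding into $C_{1}$, namely controlling the weighted fractional seminorm of $\chi\mu$. The natural device is the Leibniz splitting
\[\chi(\xi)\mu(\xi)-\chi(\eta)\mu(\eta)=\chi(\xi)\bigl(\mu(\xi)-\mu(\eta)\bigr)+\bigl(\chi(\xi)-\chi(\eta)\bigr)\mu(\eta).\]
The first term is dominated in the seminorm by $\|\chi\|_{\infty}$ times the seminorm of $\mu$, so it is harmless. For the second term one uses $|\chi(\xi)-\chi(\eta)|\le L\,|\xi-\eta|$, which turns the singular kernel $|\xi-\eta|^{-(n+p\beta)}$ into $|\xi-\eta|^{-(n+p\beta)+p}$; the crucial point is that this kernel is integrable near the diagonal precisely because $0<\beta<1$, and the remaining $\mu(\eta)$-factor together with the weight $(1+|\xi|^{n+p\beta})$ is controlled by $\|\mu\|_{L^{p}}$ over the bounded set $K$. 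Keeping track of how the weight interacts with the difference $\chi(\xi)-\chi(\eta)$, and verifying that multiplication by the bounded factor $(1+|\xi|^{n+p\beta})\chi$ preserves membership in the slow-growth class via condition~\ref{eqn2.6}, is the only genuinely technical point; everything else is the bookkeeping already carried out in Proposition~\ref{prop3.5} and Lemma~\ref{lem3.10}.
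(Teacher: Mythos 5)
Your argument is correct in outline and is essentially a fleshed-out version of what the paper does: the paper's own proof merely asserts that a ``rearrangement'' $\mu^{\ast}$ of $\mu$ exists with compact support in $\Omega$ and agreeing with $\mu$ on $\Omega'$, declares the support property obvious, and refers to Lemma~\ref{lem3.10} for the norm bound. You supply the two ingredients the paper leaves implicit: the concrete construction $\mu^{\ast}=\chi\mu$ with a cut-off equal to $1$ near $\overline{\Omega'}$ and supported in a compact $K\subset\Omega$, and the quantitative product estimate via the Leibniz splitting, which you then chain with the zero-extension bound of Lemma~\ref{lem3.10}. Your identification of the near-diagonal integrability of $\vert\xi-\eta\vert^{-(n+p\beta)+p}$ for $\beta<1$ as the crux of the product estimate is exactly right, and this is the standard localisation-plus-zero-extension route in the fractional Sobolev literature. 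Two caveats. First, Proposition~\ref{prop3.5} as stated is purely qualitative ($\mu\varphi\in\widetilde{\mathcal{W}}_{\mathcal{U}}^{\beta,p}(\Omega)$) and its proof does not deliver a bound of the form $C_{1}\Vert\mu\Vert_{\widetilde{\mathcal{W}}_{\mathcal{U}}^{\beta,p}(\Omega)}$, so the constant $C_{1}$ really comes from your own Leibniz computation; you should carry that computation out explicitly rather than cite the proposition for it. Second, the existence of a compactly supported cut-off $\chi$ \emph{inside the class} $\mathcal{U}$ is delicate here: Definition~\ref{def2.6} takes $\mathcal{U}(\mathbb{C})$ to consist of entire functions, and a nonzero entire function cannot be compactly supported, so your $\chi$ can only live in the $C^{\infty}$ version of Definition~\ref{def2.5}. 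That tension is already built into the statement of the lemma itself (property (i) forces $E\mu$ to be compactly supported), so it is a defect of the paper's framework rather than of your argument, but you should state explicitly which definition of $\mathcal{U}$ you are working with when you choose $\chi$.
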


\begin{proof}
The proof is shown as follows: For any $\mu\in \widetilde{\mathcal{W}}_{\mathcal{U}}^{\beta,p}(\Omega)$, there is a rearrangement $\mu^{\ast}\in \widetilde{\mathcal{W}}_{\mathcal{U}}^{\beta,p}(\Omega)$ of $\mu$. Then let $M(\beta,K)$ such that $E\mu=\mu^{\ast}$ with the required properties. Since $K\subset\Omega$, then clearly $\text{supp}(E\mu)\subseteq K\subset\Omega'\subset\Omega$. The remaining proof this result is similar to the proof given in Lemma \ref{lem3.10}. Hence the result.
\end{proof}

\begin{Rem}
The extension $E\mu$ in Lemma \ref{lem3.14} is termed the interior extension of $\mu$ from $\Omega$ to the whole of $\mathbb{C}$. In a similar view, there is also need to construct the exterior extension to establish the relationship or coincidence between the original tempered ultradistribution and the extended ultradistribution of slow growth in the entire complex domain $\Omega$.
\end{Rem}
The next result explains the restriction on the tempered ultradistribution to the extended function with reference to the slow growth condition with ultra-polynomials.

\begin{Th}
\label{theo3.15}
Let $\Omega$ be a bounded domain, $0<\beta<1$ and $p\in [1,\infty)$. Suppose that $|\text{Im}(\xi)|<p\beta$ for $p\beta>1$ and $\mu\in\mathbb{C}$ such that $p(1-\beta p)<\mu$. Then there exists a constant $M$ which depends on $\beta,p$ and $\Omega$ for every bounded domain $\Omega'\subset\Omega$ such that for any $\mu\in \widetilde{\mathcal{W}}_{\mathcal{U}}^{\beta,p}(\Omega)\cap L^{p}(\Omega)$, there is a $\mu^{\ast}\in \widetilde{\mathcal{W}}_{\mathcal{U}}^{\beta,p}(\Omega')$ such that the following properties hold:
\begin{enumerate}
\item[(i)]$\displaystyle \sup\Big((1+|\xi|^{n+\beta p}|\mu^{\ast}(\xi)|^{p})\Big)<\infty$ for $\xi\in\Omega'$;
\item[(ii)] $\mu^{\ast}=\mu$ almost everywhere in $\Omega$;
\item[(iii)] $\text{supp}(\mu^{\ast})\subset\Omega'$;
\item[(iv)] $\displaystyle \Vert \mu^{\ast}\Vert_{\widetilde{\mathcal{W}}_{\mathcal{U}}^{\beta,p}(\Omega')}\leq M\Vert\mu\Vert_{\widetilde{\mathcal{W}}_{\mathcal{U}}^{\beta,p}(\Omega)}$
\end{enumerate}
\end{Th}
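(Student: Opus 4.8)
The plan is to realize $\mu^{\ast}$ as the product of $\mu$ with a compactly supported ultradifferentiable cut-off, to read off properties (i)--(iii) almost directly from this construction together with Proposition \ref{prop3.5}, and to concentrate the real work on the norm estimate (iv). First I would fix the geometry: since $\Omega$ is bounded and $\Omega'\subset\Omega$, choose a compact $K$ with $\overline{\Omega'}\subset K\subset\Omega$ and a cut-off $\chi\in\mathcal{U}(\mathbb{C}^{n})$ (Definition \ref{def2.5}) with $\chi\equiv 1$ on a neighborhood of $\overline{\Omega'}$ and $\text{supp}(\chi)\subseteq K$; by the growth condition (\ref{eqn2.5}) the function $\chi$ and all of its ultra-derivatives are uniformly bounded. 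I would then set $\mu^{\ast}:=\chi\mu$, identifying it with the slow-growth rearrangement furnished by Lemma \ref{lem3.14} on $\Omega\setminus\Omega'$, so that $\mu^{\ast}$ retains the weight $(1+|\xi|^{n+p\beta})$ intrinsic to the space.

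With this $\mu^{\ast}$ in hand, properties (ii) and (iii) are immediate: $\chi\equiv 1$ on $\Omega'$ forces $\mu^{\ast}=\mu$ almost everywhere there, and $\text{supp}(\mu^{\ast})\subseteq\text{supp}(\chi)\subseteq\Omega'$. Property (i) I would deduce from Proposition \ref{prop3.5}: the space $\widetilde{\mathcal{W}}_{\mathcal{U}}^{\beta,p}(\Omega)$ being of local type and $\chi\in\mathcal{U}(\Omega)$, the product $\chi\mu$ again belongs to the space and satisfies $\sup_{|\text{Im}(\xi)|<p\beta}\big\{(1+|\xi|^{n+\beta p})|\mu^{\ast}(\xi)|^{p}\big\}<\infty$; restricting to the bounded set $\Omega'$ gives the asserted finiteness of $\sup\big((1+|\xi|^{n+\beta p})|\mu^{\ast}(\xi)|^{p}\big)$.

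The main obstacle is the bound (iv). Here I would expand the Gagliardo-type difference through
\[
\chi(\xi)\mu(\xi)-\chi(\eta)\mu(\eta)=\chi(\xi)\big(\mu(\xi)-\mu(\eta)\big)+\mu(\eta)\big(\chi(\xi)-\chi(\eta)\big)
\]
and estimate the two resulting weighted double integrals separately. The first is dominated by $\|\chi\|_{\infty}^{p}$ times the weighted seminorm $[\mu]_{\widetilde{\mathcal{W}}_{\mathcal{U}}^{\beta,p}(\Omega)}^{p}$, hence by $\|\mu\|_{\widetilde{\mathcal{W}}_{\mathcal{U}}^{\beta,p}(\Omega)}^{p}$. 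For the second, the Lipschitz bound $|\chi(\xi)-\chi(\eta)|\leq\|\chi'\|_{\infty}|\xi-\eta|$ reduces the term to an integral of the shape $\int_{\Omega'}(1+|\xi|^{n+p\beta})|\mu(\eta)|^{p}\big(\int|\xi-\eta|^{p-(n+p\beta)}d\xi\big)d\eta$; this is precisely where the hypotheses $p\beta>1$ and $p(1-\beta p)<\mu$ are used, ensuring that the inner singular integral converges near the diagonal and contributes a finite factor of order $t^{p(1-\beta)}$ with $t=\sup_{\xi,\eta\in\Omega}|\xi-\eta|$, exactly as in the estimate carried out in the proof of Proposition \ref{prop3.5}. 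Combining the two bounds yields $\|\mu^{\ast}\|_{\widetilde{\mathcal{W}}_{\mathcal{U}}^{\beta,p}(\Omega')}\leq M\|\mu\|_{\widetilde{\mathcal{W}}_{\mathcal{U}}^{\beta,p}(\Omega)}$ with $M=M(\beta,p,\Omega)$, which is (iv). The delicate point throughout is to confirm that the chosen $K$ and cut-off $\chi$ keep all weighted singular integrals over $\Omega'$ finite under the stated range of parameters.
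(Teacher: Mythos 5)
Your route is genuinely different from the paper's. You build $\mu^{\ast}$ explicitly as $\chi\mu$ for an ultradifferentiable cut-off $\chi$ and control the Gagliardo-type seminorm by splitting $\chi(\xi)\mu(\xi)-\chi(\eta)\mu(\eta)$ into a $\chi$-times-difference term and a $\mu$-times-Lipschitz term, exactly as in the classical extension lemmas for fractional Sobolev spaces. The paper instead takes a sequence $(1+|\xi|^{\beta p}\mu_i)\subset C^{\infty}(\Omega)$ converging to $(1+|\xi|^{\beta p}\mu)$, shows the weighted sequence and its ultra-derivatives are bounded and Cauchy in $L^{p}(\Omega')$, and concludes that $D^{\beta}\mu\in L^{p}(\Omega')$; it never actually constructs $\mu^{\ast}$ nor verifies any of (i)--(iv), and it opens by assuming $\beta p<1$ against the stated hypothesis $p\beta>1$. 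Your construction therefore delivers strictly more than the paper's argument does: a concrete $\mu^{\ast}$ and the operator bound (iv) with an explicit constant, at the cost of needing the cut-off and the singular-integral estimate.

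There is, however, a genuine gap in your handling of (ii) and (iii), forced on you by the statement itself. You choose $\overline{\Omega'}\subset K\subset\Omega$ and $\operatorname{supp}(\chi)\subseteq K$, which yields $\operatorname{supp}(\mu^{\ast})\subseteq K$, \emph{not} $\operatorname{supp}(\mu^{\ast})\subset\Omega'$ as (iii) requires; and $\chi\equiv 1$ on $\Omega'$ gives $\mu^{\ast}=\mu$ a.e.\ on $\Omega'$, whereas (ii) demands equality a.e.\ on the larger set $\Omega$. In fact, with $\Omega'\subset\Omega$ as written, (ii) and (iii) together force $\mu$ to vanish a.e.\ on $\Omega\setminus\Omega'$, so no construction can satisfy both for general $\mu$; the statement is only coherent if the inclusion is read as $\Omega\subset\subset\Omega'$ (extension to a larger domain), in which case your cut-off should be $\equiv 1$ on $\Omega$ and supported in $\Omega'$. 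You should either reorient the inclusions accordingly or flag the inconsistency. A smaller point: the convergence of the inner integral $\int|\xi-\eta|^{p-(n+p\beta)}\,d\xi$ near the diagonal is guaranteed by $p(1-\beta)>0$, i.e.\ by $\beta<1$ alone; attributing it to $p\beta>1$ and $p(1-\beta p)<\mu$ is misleading, since neither hypothesis is what makes that integral finite.
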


\begin{proof}
Assume that $\beta p<1$ and $|\text{Im}(\xi)|<p\beta$. Let $\Omega'$ be a bounded domain in $\Omega$ and $\mu\in \widetilde{\mathcal{W}}_{\mathcal{U}}^{\beta,p}(\Omega)\cap L^{p}(\Omega)$. To this end, let $\displaystyle\lbrace (1+|\xi|^{\beta p}\mu_{i})\rbrace\subset C^{\infty}(\Omega)$ such that $(1+|\xi|^{\beta p}\mu_{i})\mapsto (1+|\xi|^{\beta p}\mu)$ in $\widetilde{\mathcal{W}}_{\mathcal{U}}^{\beta,p}(\Omega)\cap L^{p}(\Omega)$ as $i\to\infty$. Hence $\displaystyle\lbrace (1+|\xi|^{\beta p}\mu_{i})\rbrace$ and $\displaystyle\lbrace D^{\beta}(1+|\xi|^{\beta p}\mu_{i})\rbrace$ are both bounded sequences in $L^{p}(\Omega')$ for $\Omega'\subset \Omega$. Since $\displaystyle D^{\beta}(1+|\xi|^{\beta p}\mu_{i})\longmapsto D^{\beta}(1+|\xi|^{\beta p}\mu)$ in $L^{p}(\Omega)$. Let $M>0$ be a bound for both sequences.\ \\
It follows that for sufficiently large $m,n$ and given $\varepsilon>0$ we have
\[\Vert D^{\beta}(1+|\xi|^{\beta p})[\mu_{m}-\mu_{n}]\Vert_{L^{p}(\Omega')}\leq M\Big( \Vert \mu_{m}-\mu_{n}\Vert_{\widetilde{\mathcal{W}}_{\mathcal{U}}^{\beta,p}(\Omega)}+\Vert \mu_{m}-\mu_{n}\Vert_{L^{p}(\Omega)}\Big)<\varepsilon\]
Therefore, there exists $\upsilon\in L^{p}(\Omega')$ such that $D^{\beta}(1+|\xi|^{\beta p}\mu_{i})\longrightarrow (1+|\xi|^{\beta p})\upsilon$ in $L^{p}(\Omega')$ and we see that $D^{\beta}(1+|\xi|^{\beta p}\mu_{i})= (1+|\xi|^{\beta p})\upsilon$ from the fractional weak derivative point of view. Hence $D^{\beta}\mu\in L^{p}(\Omega')$.
\end{proof}

The following result concerns the existence of embedding with emphasis on the fractional order.

\begin{Prop}
\label{prop3.7}
The following embeddings are satisfied by $\widetilde{\mathcal{W}}_{\mathcal{U}}^{\beta,p}$
\begin{itemize}
\item[(i)] if $0<\beta'<\beta<1$, then $\displaystyle \widetilde{\mathcal{W}}_{\mathcal{U}}^{\beta,p}(\Omega)\hookrightarrow \widetilde{\mathcal{W}}_{\mathcal{U}}^{\beta',p}(\Omega)$,
\item[(ii)] if $\beta\in(0,1)$, then $\displaystyle \mathcal{W}_{\mathcal{U}}^{1,p}(\Omega)\hookrightarrow \widetilde{\mathcal{W}}_{\mathcal{U}}^{\beta,p}(\Omega)$
\end{itemize}
In other words, (ii) shows that the integer order Sobolev space is continuously embedded in the fractional Sobolev space.
\end{Prop}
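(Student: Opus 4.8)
The plan is to reduce both embeddings to pointwise control of the weighted Gagliardo-type seminorm $[\,\cdot\,]_{\widetilde{\mathcal{W}}_{\mathcal{U}}^{\beta,p}(\Omega)}$ introduced in Proposition \ref{prop3.4}, and in each case to split the double integral over $\Omega\times\Omega$ into the near-diagonal region $\{|\xi-\eta|<1\}$ and the far region $\{|\xi-\eta|\ge 1\}$, estimating each piece separately. Throughout I would use the elementary inequality $|\mu(\xi)-\mu(\eta)|^{p}\le 2^{p-1}\big(|\mu(\xi)|^{p}+|\mu(\eta)|^{p}\big)$ on the far region, together with the slow-growth bound coming from condition (\ref{eqn2.6}), namely that $\sup_{|\mathrm{Im}(\xi)|<p\beta}(1+|\xi|^{n+p\beta})|\mu(\xi)|<\infty$ for $\mu\in\mathcal{U}(\mathbb{C}^{n})$, to absorb the growth of the weight.

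For part (i) I would first record the weight comparison: since $\beta'<\beta$ and $|\xi|^{n+p\beta'}\le 1+|\xi|^{n+p\beta}$ for every $\xi$, one has $(1+|\xi|^{n+p\beta'})\le 2(1+|\xi|^{n+p\beta})$, so the $\beta'$-weight is dominated by the $\beta$-weight up to a constant. On the near-diagonal region $|\xi-\eta|<1$ forces $|\xi-\eta|^{-(n+p\beta')}\le |\xi-\eta|^{-(n+p\beta)}$, because the base lies below $1$ and $n+p\beta'<n+p\beta$; combined with the weight comparison this bounds the near-diagonal part of $[\mu]_{\widetilde{\mathcal{W}}_{\mathcal{U}}^{\beta',p}}^{p}$ directly by a constant times $[\mu]_{\widetilde{\mathcal{W}}_{\mathcal{U}}^{\beta,p}}^{p}$. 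On the far region I would apply the triangle-inequality split together with the convergence of $\int_{|\xi-\eta|\ge 1}|\xi-\eta|^{-(n+p\beta')}\,d\eta$ (valid since $n+p\beta'>n$), leaving a weighted $L^{p}$ contribution that is finite by condition (\ref{eqn2.6}). Adding the $L^{p}$ terms and taking $p$-th roots yields $\Vert\mu\Vert_{\widetilde{\mathcal{W}}_{\mathcal{U}}^{\beta',p}(\Omega)}\le C\,\Vert\mu\Vert_{\widetilde{\mathcal{W}}_{\mathcal{U}}^{\beta,p}(\Omega)}$, which is the continuity of the inclusion.

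For part (ii) the near-diagonal estimate is where the integer-order data enter. Writing $\mu(\xi)-\mu(\eta)=\int_{0}^{1}\nabla\mu(\eta+t(\xi-\eta))\cdot(\xi-\eta)\,dt$ and applying Jensen's inequality gives $|\mu(\xi)-\mu(\eta)|^{p}\le |\xi-\eta|^{p}\int_{0}^{1}|\nabla\mu(\eta+t(\xi-\eta))|^{p}\,dt$, so the near-diagonal integrand is controlled by $|\xi-\eta|^{p(1-\beta)-n}$ against the gradient; since $\beta<1$ the exponent $p(1-\beta)$ is positive and the radial integral $\int_{0}|\xi-\eta|^{p(1-\beta)-1}\,d|\xi-\eta|$ converges, giving after a change of variables a bound by a constant times the weighted $\Vert\nabla\mu\Vert_{L^{p}}^{p}$, hence by $\Vert\mu\Vert_{\mathcal{W}_{\mathcal{U}}^{1,p}(\Omega)}^{p}$. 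The far region is handled exactly as in part (i), producing a weighted $L^{p}$ term again controlled by condition (\ref{eqn2.6}). Summing the two contributions gives $[\mu]_{\widetilde{\mathcal{W}}_{\mathcal{U}}^{\beta,p}}^{p}\le C\,\Vert\mu\Vert_{\mathcal{W}_{\mathcal{U}}^{1,p}(\Omega)}^{p}$ and thus the continuous embedding.

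The main obstacle I anticipate is the far-region contribution in both parts. Because the weight $(1+|\xi|^{n+p\beta})$ depends only on $\xi$ while the kernel $|\xi-\eta|^{-(n+p\beta)}$ decays at the same rate $|\xi|^{-(n+p\beta)}$ for large $|\xi|$, the naive attempt to bound this piece by the unweighted $L^{p}$ norm fails, since the $\xi$-integral of the weight against the kernel is only borderline convergent. Resolving this is precisely where the ultradistribution framework becomes essential: the slow-growth condition (\ref{eqn2.6}) lets me treat $(1+|\xi|^{n+p\beta})|\mu(\xi)|$ as a \emph{bounded} factor on the strip $|\mathrm{Im}(\xi)|<p\beta$, so that the weight is absorbed into a finite supremum rather than integrated against a non-integrable tail. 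Making this absorption clean, while ensuring the constant $C$ depends only on $\beta,\beta',p,n$ and $\Omega$, is the delicate point; the near-diagonal estimates, by contrast, are the standard fractional-Sobolev computations and should go through routinely.
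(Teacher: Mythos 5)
Your argument is essentially correct, but it follows a genuinely different route from the paper's. The paper proves only part (i), and does so by inserting suprema over the strips $|\mathrm{Im}(\xi)|<p\beta'$ and $|\mathrm{Im}(\xi)|<p\beta$ and asserting in one step that the whole double integral with exponent $n+p\beta'$ is dominated by the one with exponent $n+p\beta$; it performs no decomposition of $\Omega\times\Omega$, and it dismisses part (ii) with ``the proof of (ii) is similar to the proof of (i).'' Your near/far splitting along $\{|\xi-\eta|<1\}$ versus $\{|\xi-\eta|\ge 1\}$ is the classical argument (Di Nezza--Palatucci--Valdinoci style) adapted to the weighted seminorm, and it actually supplies what the paper's one-line comparison is missing: the pointwise inequality $|\xi-\eta|^{-(n+p\beta')}\le|\xi-\eta|^{-(n+p\beta)}$ only holds on the near-diagonal region, and fails for $|\xi-\eta|>1$, so the paper's monotonicity claim cannot be read off termwise without your splitting. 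Likewise your treatment of (ii) via $\mu(\xi)-\mu(\eta)=\int_0^1\nabla\mu(\eta+t(\xi-\eta))\cdot(\xi-\eta)\,dt$ and Jensen is the substantive content that the paper omits entirely. What the paper's approach ``buys'' is brevity at the cost of a genuine logical gap; what yours buys is an argument that can actually be completed, with explicit constants on the near-diagonal piece.

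One caveat you have already half-identified: on the far region your bound reduces to the weighted quantity $\int_\Omega(1+|\xi|^{n+p\beta'})|\mu(\xi)|^p\,d\xi$, and for a continuous embedding this must be controlled by $\Vert\mu\Vert_{\widetilde{\mathcal{W}}_{\mathcal{U}}^{\beta,p}(\Omega)}$ with a constant depending only on $\beta,\beta',p,n,\Omega$ --- not merely be finite for each fixed $\mu$. The slow-growth condition (\ref{eqn2.6}) gives finiteness of $\sup(1+|\xi|^{p})|\mu(\xi)|$ for each $\mu\in\mathcal{U}$, but that supremum is not a priori bounded by the $\widetilde{\mathcal{W}}_{\mathcal{U}}^{\beta,p}$ norm, so invoking it yields a qualitative inclusion rather than a norm estimate. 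If $\Omega$ is bounded the weight is bounded on $\Omega$ and the far-field term is simply $\le C\Vert\mu\Vert_{L^p(\Omega)}^p$, closing the argument; for unbounded $\Omega$ you should either add the weighted $L^p$ quantity to the norm or restrict the statement. The paper's own proof is silent on this point as well, so this is a defect of the proposition's formulation rather than of your strategy, but it is the one step in your plan that does not yet go through as written.
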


\begin{proof}
We prove property (i). We need to prove that for $\beta'<\beta\in (0,1)$ then $\displaystyle \widetilde{\mathcal{W}}_{\mathcal{U}}^{\beta,p}(\Omega)\hookrightarrow \widetilde{\mathcal{W}}_{\mathcal{U}}^{\beta',p}(\Omega)$.\ \\
Let $\mu$ satisfy condition (\ref{eqn2.6}) and belong to $\widetilde{\mathcal{W}}_{\mathcal{U}}^{\beta',p}(\Omega)$ such that $\mu\mapsto (1+|\xi|^{n+p\beta})|\hat{\mu}(\xi)|$ in $\widetilde{\mathcal{W}}_{\mathcal{U}}^{\beta',p}(\Omega)$. \ \\
Hence
\begin{align*}
\Vert\mu\Vert_{\widetilde{\mathcal{W}}_{\mathcal{U}}^{\beta',p}(\Omega)}&=\Vert\mu\Vert_{L^{p}(\Omega)}+|\mu|_{\widetilde{\mathcal{W}}_{\mathcal{U}}^{\beta',p}(\Omega)}\\
&=\Vert\mu\Vert_{L^{p}(\Omega)}+\Big(\int_{\Omega}\int_{\Omega}\frac{(1+|\xi|^{n+p\beta'})|\mu(\xi)-\mu(\eta)|^{p}}{|\xi-\eta|^{n+p\beta'}}d\xi d\eta\Big)^{\frac{1}{p}}\\
&\leq \Vert\mu\Vert_{L^{p}(\Omega)}+\sup_{\substack{\xi,\eta\in\mathbb{C}\\ |\text{Im}(\xi)|<p\beta'}}\Big(\int_{\Omega}\int_{\Omega}\frac{(1+|\xi|^{n+p\beta'})|\mu(\xi)-\mu(\eta)|^{p}}{|\xi-\eta|^{n+p\beta'}}d\xi d\eta\Big)^{\frac{1}{p}}\\
&\leq \Vert\mu\Vert_{L^{p}(\Omega)}+\sup_{\substack{\xi,\eta\in\mathbb{C}\\ |\text{Im}(\xi)|<p\beta}}\Big(\int_{\Omega}\int_{\Omega}\frac{(1+|\xi|^{n+p\beta})|\mu(\xi)-\mu(\eta)|^{p}}{|\xi-\eta|^{n+p\beta}}d\xi d\eta\Big)^{\frac{1}{p}}\\
&\leq \Vert\mu\Vert_{L^{p}(\Omega)}+\Big(\int_{\Omega}\int_{\Omega}\frac{(1+|\xi|^{n+p\beta})|\mu(\xi)-\mu(\eta)|^{p}}{|\xi-\eta|^{n+p\beta}}d\xi d\eta\Big)^{\frac{1}{p}}\\
&=\Vert\mu\Vert_{L^{p}(\Omega)}+|\mu|_{\widetilde{\mathcal{W}}_{\mathcal{U}}^{\beta,p}(\Omega)}
\end{align*}
for $n+p\beta'<n+p\beta$ which implies that $p\beta<p\beta'$.\ \\
Hence
\[\Vert\mu\Vert_{\widetilde{\mathcal{W}}_{\mathcal{U}}^{\beta',p}(\Omega)}\leq\Vert\mu\Vert_{\widetilde{\mathcal{W}}_{\mathcal{U}}^{\beta,p}(\Omega)} \]
The proof of (ii) is similar to the proof of (i).
\end{proof}

\begin{Rem}
From Proposition \ref{prop3.7}, we obtain the following density result.
\end{Rem}

\begin{Prop}
\label{prop3.8}
Let $\beta\in (0,1)$ and $p>0$. Suppose that an open subset $\Omega\subset\mathbb{C}$ admit an extension, then $\mathcal{U}(\overline{\Omega})$, the space of restriction to $\Omega$ of functions in $\mathbb{D}(\mathbb{C}^{n})$, is dense in $\widetilde{\mathcal{W}}_{\mathcal{U}}^{\beta,p}(\Omega)$. 
\end{Prop}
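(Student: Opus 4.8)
The plan is to reduce the density assertion on $\Omega$ to the global density statement on $\mathbb{C}^{n}$ already proved in Proposition \ref{prop3.6}, by pushing an arbitrary target through a bounded extension operator, approximating on the whole space, and then restricting the approximants back to $\Omega$. The hypothesis that $\Omega$ \emph{admits an extension} is exactly what lets the first and last steps cooperate.

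First I would fix an arbitrary $\mu \in \widetilde{\mathcal{W}}_{\mathcal{U}}^{\beta,p}(\Omega)$ and invoke the extension hypothesis: by Lemma \ref{lem3.10} and Lemma \ref{lem3.14} there is a bounded linear operator $E : \widetilde{\mathcal{W}}_{\mathcal{U}}^{\beta,p}(\Omega) \longrightarrow \widetilde{\mathcal{W}}_{\mathcal{U}}^{\beta,p}(\mathbb{C}^{n})$ with $(E\mu)|_{\Omega} = \mu$ and $\Vert E\mu\Vert_{\widetilde{\mathcal{W}}_{\mathcal{U}}^{\beta,p}(\mathbb{C}^{n})} \le M\Vert\mu\Vert_{\widetilde{\mathcal{W}}_{\mathcal{U}}^{\beta,p}(\Omega)}$, the extended object still obeying the slow-growth control $\sup_{|\text{Im}(\xi)|<p\beta}(1+|\xi|^{n+p\beta})|\widehat{E\mu}(\xi)| < \infty$ of condition (\ref{eqn2.6}). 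Thus $E\mu$ is a bona fide element of the ambient space $\widetilde{\mathcal{W}}_{\mathcal{U}}^{\beta,p}(\mathbb{C}^{n})$. Applying Proposition \ref{prop3.6} to $E\mu$ yields a sequence $\{\varphi_{k}\} \subset \mathbb{D}(\mathbb{C}^{n})$ with $\varphi_{k} \longrightarrow E\mu$ in $\widetilde{\mathcal{W}}_{\mathcal{U}}^{\beta,p}(\mathbb{C}^{n})$. By the very definition of $\mathcal{U}(\overline{\Omega})$ as restrictions to $\Omega$ of functions of $\mathbb{D}(\mathbb{C}^{n})$, each $\varphi_{k}|_{\Omega}$ lies in $\mathcal{U}(\overline{\Omega})$.

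It then remains to transfer the global convergence to $\Omega$, and this rests on the continuity of the restriction map $R : f \mapsto f|_{\Omega}$. Here I would observe that both ingredients of the norm in Proposition \ref{prop3.4}, namely $\Vert\cdot\Vert_{L^{p}}$ and the weighted Gagliardo seminorm $[\cdot]_{\widetilde{\mathcal{W}}_{\mathcal{U}}^{\beta,p}}$, are integrals of nonnegative integrands, so that passing from $\mathbb{C}^{n}$ (respectively $\mathbb{C}^{n}\times\mathbb{C}^{n}$) to the smaller domain $\Omega$ (respectively $\Omega\times\Omega$) can only decrease them; hence
\[
\Vert f|_{\Omega}\Vert_{\widetilde{\mathcal{W}}_{\mathcal{U}}^{\beta,p}(\Omega)} \;\le\; \Vert f\Vert_{\widetilde{\mathcal{W}}_{\mathcal{U}}^{\beta,p}(\mathbb{C}^{n})},
\]
so $R$ is bounded with norm at most $1$. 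Applying $R$ to $\varphi_{k} \to E\mu$ and using $R(E\mu) = (E\mu)|_{\Omega} = \mu$ gives $\varphi_{k}|_{\Omega} \longrightarrow \mu$ in $\widetilde{\mathcal{W}}_{\mathcal{U}}^{\beta,p}(\Omega)$. Since each $\varphi_{k}|_{\Omega} \in \mathcal{U}(\overline{\Omega})$ and $\mu$ was arbitrary, $\mathcal{U}(\overline{\Omega})$ is dense in $\widetilde{\mathcal{W}}_{\mathcal{U}}^{\beta,p}(\Omega)$.

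The restriction step is essentially free, so the main obstacle I anticipate is the extension step: one must be sure the operator $E$ supplied by the hypothesis does not merely extend $\mu$ as an $L^{p}$ function but lands inside $\widetilde{\mathcal{W}}_{\mathcal{U}}^{\beta,p}(\mathbb{C}^{n})$ while preserving the ultradistributional slow-growth condition on the strip $|\text{Im}(\xi)|<p\beta$, since that is precisely the property Proposition \ref{prop3.6} consumes. Establishing (or citing from Lemmas \ref{lem3.10} and \ref{lem3.14}) that the extension respects this strip-wise control, and not only the classical Sobolev seminorm, is the delicate point on which the whole reduction hinges.
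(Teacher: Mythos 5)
Your proposal is correct and follows essentially the same route as the paper's own proof: extend $\mu$ to all of $\mathbb{C}^{n}$ by the hypothesized continuous extension operator, approximate the extension by the global density result of Proposition \ref{prop3.6}, and restrict the approximating sequence back to $\Omega$. You in fact supply more detail than the paper does, in particular the observation that the restriction map is a contraction and the caveat that the extension must preserve the slow-growth condition (\ref{eqn2.6}), both of which the paper leaves implicit.
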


\begin{proof}
Let $\mu\in \widetilde{\mathcal{W}}_{\mathcal{U}}^{\beta,p}(\Omega)$ satisfy condition (\ref{eqn2.6}). Let $\displaystyle T:\widetilde{\mathcal{W}}_{\mathcal{U}}^{\beta,p}(\Omega)\longrightarrow\widetilde{\mathcal{W}}_{\mathcal{U}}^{\beta,p}(\mathbb{C})$ be a continuous extension such that $T(\mu)\in\widetilde{\mathcal{W}}_{\mathcal{U}}^{\beta,p}(\mathbb{C})$. Therefore there exists a sequence $\lbrace\varphi_{n}\rbrace$ of ultra-differentiable rapdily decreasing functions in $\mathcal{U}(\mathbb{C}^{n})$ which converges to $T(\mu)$ in $\widetilde{\mathcal{W}}_{\mathcal{U}}^{\beta,p}(\mathbb{C})$. Then the sequence of restriction of the $\varphi_n$ converges to $\mu$ in $\widetilde{\mathcal{W}}_{\mathcal{U}}^{\beta,p}(\Omega)$ for $p>1$ and $0<\beta<1$.
\end{proof}

\begin{Rem}
The next result is as a consequence of Proposition \ref{prop3.8}. In the result, there is an assumption of the complex domain to be a bounded Lipschitz open subset of $\mathbb{C}$ which will give rise to the compactness of embeddings.
\end{Rem}

\begin{Cor}
\label{cor3.9}
Assume that $\beta\in (0,1)$ and $1<p<\infty$, and $\Omega$ a Lipschitz open set in $\mathbb{C}^n$. Then we have:
\begin{itemize}
\item[(i)] if $n>p\beta$, then $\displaystyle\widetilde{\mathcal{W}}_{\mathcal{U}}^{\beta,p}(\Omega)\hookrightarrow L^{p'}(\Omega)$ for every $p'(n-p\beta)\leq np$;
\item[(ii)]if $p\beta=n$, then $\displaystyle\widetilde{\mathcal{W}}_{\mathcal{U}}^{\beta,p}(\Omega)\hookrightarrow L^{p'}(\Omega)$ for any $p'<\infty$;
\item[(iii)]if $p\beta>n$, then $\displaystyle\widetilde{\mathcal{W}}_{\mathcal{U}}^{\beta,p}(\Omega)\hookrightarrow L^{\infty}(\Omega)$.
\end{itemize}
\end{Cor}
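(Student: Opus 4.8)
The plan is to deduce all three embeddings from the corresponding classical fractional Sobolev embeddings, exploiting two structural facts about the weighted space. First, since the growth weight satisfies $1+|\xi|^{n+p\beta}\ge 1$ pointwise, the seminorm $[\mu]_{\widetilde{\mathcal{W}}_{\mathcal{U}}^{\beta,p}(\Omega)}$ dominates the ordinary Gagliardo seminorm $|\mu|_{\mathcal{W}^{\beta,p}(\Omega)}$ of Definition~\ref{def2.2}; hence $\Vert\mu\Vert_{\mathcal{W}^{\beta,p}(\Omega)}\le\Vert\mu\Vert_{\widetilde{\mathcal{W}}_{\mathcal{U}}^{\beta,p}(\Omega)}$, so that $\widetilde{\mathcal{W}}_{\mathcal{U}}^{\beta,p}(\Omega)$ embeds continuously into the classical space $\mathcal{W}^{\beta,p}(\Omega)$. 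Second, because $\Omega$ is bounded and Lipschitz it admits an extension, so Proposition~\ref{prop3.8} applies and $\mathcal{U}(\overline{\Omega})$ is dense in $\widetilde{\mathcal{W}}_{\mathcal{U}}^{\beta,p}(\Omega)$. Together these let me reduce each inequality to a known estimate on a smooth dense class and then pass to the limit.

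First I would treat (i), the subcritical case $n>p\beta$. For $\mu\in\mathcal{U}(\overline{\Omega})$ I would invoke the fractional Gagliardo--Nirenberg--Sobolev inequality on the Lipschitz domain $\Omega$, which gives $\Vert\mu\Vert_{L^{p^{\ast}}(\Omega)}\le C\,\Vert\mu\Vert_{\mathcal{W}^{\beta,p}(\Omega)}$ with the critical exponent $p^{\ast}=\tfrac{np}{\,n-p\beta\,}$. Chaining this with the domination from the previous paragraph yields $\Vert\mu\Vert_{L^{p^{\ast}}(\Omega)}\le C\,\Vert\mu\Vert_{\widetilde{\mathcal{W}}_{\mathcal{U}}^{\beta,p}(\Omega)}$, and a density argument through Proposition~\ref{prop3.8} extends this to all of $\widetilde{\mathcal{W}}_{\mathcal{U}}^{\beta,p}(\Omega)$. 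Finally, the condition $p'(n-p\beta)\le np$ is exactly $p'\le p^{\ast}$, so since $\Omega$ is bounded Hölder's inequality gives $L^{p^{\ast}}(\Omega)\hookrightarrow L^{p'}(\Omega)$, completing (i).

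For (ii), the critical case $p\beta=n$, I would replace the Sobolev inequality by the exponential-integrability (Trudinger-type) estimate for the critical fractional space, which together with the boundedness of $\Omega$ places $\mu$ in $L^{p'}(\Omega)$ for every finite $p'$; and for (iii), $p\beta>n$, I would apply the fractional Morrey embedding, placing $\mu$ in a Hölder class $C^{0,\beta-n/p}(\overline{\Omega})\subset L^{\infty}(\Omega)$. In both cases the same two-step scheme---domination by the weighted norm followed by density---transfers the classical conclusion to $\widetilde{\mathcal{W}}_{\mathcal{U}}^{\beta,p}(\Omega)$.

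The main obstacle is not the algebra but the uniform control in the limiting step: one must verify that convergence of a net $\varphi_{n}\in\mathcal{U}(\overline{\Omega})$ to $\mu$ in $\widetilde{\mathcal{W}}_{\mathcal{U}}^{\beta,p}(\Omega)$ forces convergence of the approximating $L^{p'}$ (respectively $L^{\infty}$) norms, so that the constant $C$ survives the passage to the limit. This is precisely guaranteed by the domination inequality, which shows convergence in $\widetilde{\mathcal{W}}_{\mathcal{U}}^{\beta,p}(\Omega)$ implies convergence in $\mathcal{W}^{\beta,p}(\Omega)$; consequently the argument closes. I would emphasize that the Lipschitz (hence extension) hypothesis is essential here, since without it Proposition~\ref{prop3.8} and the classical embedding constants on $\Omega$ are unavailable.
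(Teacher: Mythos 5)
Your argument is correct in outline but takes a genuinely different route from the paper. You reduce everything to the classical fractional Sobolev embeddings via the pointwise bound $1+|\xi|^{n+p\beta}\ge 1$, which gives $|\mu|_{\mathcal{W}^{\beta,p}(\Omega)}\le[\mu]_{\widetilde{\mathcal{W}}_{\mathcal{U}}^{\beta,p}(\Omega)}$ and hence a continuous inclusion $\widetilde{\mathcal{W}}_{\mathcal{U}}^{\beta,p}(\Omega)\hookrightarrow\mathcal{W}^{\beta,p}(\Omega)$; composing with the Gagliardo--Nirenberg--Sobolev, Trudinger, and Morrey embeddings on the Lipschitz domain then yields (i)--(iii). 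The paper instead argues by induction on $[\beta]$, passing to $\nabla\mu\in\widetilde{\mathcal{W}}_{\mathcal{U}}^{\beta-1,p}(\Omega)$ and bootstrapping the integrability exponent $t=np/(n-p(\beta-1))$ --- the standard gradient-reduction scheme for integer-order spaces $\mathcal{W}^{k,p}$. Your approach is cleaner and better adapted to the actual hypothesis $\beta\in(0,1)$, where $[\beta]=0$ and the paper's induction has no nontrivial base-to-step content (and $\beta-1<0$ would put $\nabla\mu$ in a negative-order space the paper never defines); the paper's scheme would only earn its keep if $\beta>1$ were allowed. Two small remarks on your write-up: the density step via Proposition~\ref{prop3.8} is superfluous, since the classical embedding already applies to every element of $\mathcal{W}^{\beta,p}(\Omega)$ and the two continuous inclusions compose directly without any limiting argument; and the case $p'<p$ in (i) does require boundedness of $\Omega$ (via H\"older), which you correctly invoke but which the corollary's hypotheses state only implicitly through the surrounding discussion of bounded Lipschitz sets.
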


\begin{proof}
We present the proof of the result by induction on $[\beta]$ for $n>p\beta$. Assume that the result has been established for $[\beta]=l-1$. Then let $\mu\in \widetilde{\mathcal{W}}_{\mathcal{U}}^{\beta,p}(\Omega)$ with $[\beta]=l$ and $p\beta<n$; then $\nabla \mu\in \widetilde{\mathcal{W}}_{\mathcal{U}}^{\beta-1,p}(\Omega)$, $D^{\alpha}\mu\in \widetilde{\mathcal{W}}_{\mathcal{U}}^{|\alpha|-l,p}(\Omega)$ for $|\alpha|\leq [\beta]=l$, and $\mu\in \widetilde{\mathcal{W}}_{\mathcal{U}}^{[\beta],p}(\Omega)$. Hence, by the induction hypothesis, we have the following: $\nabla\mu\in L^{t}(\Omega)$ with $t=\frac{np}{(n-p(\beta-1))}$ and $\displaystyle\mu\in L^{np((n-p(\beta-1))}(\Omega)\implies \mu\in L^{np/(n-p\beta)}$\ \\
Next, assume that $p\beta=n$, that is, $p\beta-n=0$. Then $[\beta]p<n$ and $(\beta-1)p<n$. Take $\mu\in \widetilde{\mathcal{W}}_{\mathcal{U}}^{\beta,p}(\Omega)$ then we see that $\mu\in \widetilde{\mathcal{W}}_{\mathcal{U}}^{1,t}(\Omega)$ with $t=np/(n-p(\beta-1))=n$. We draw conclusion that $\mu\in L^{p'}$ for any $p'<\infty$ since $t=n$.\ \\
Finally for (iii) we assume that $\beta-1-\frac{n}{p}<r<\beta-\frac{n}{p}$ for any $r\in\mathbb{Z}$ then $\mu\in \widetilde{\mathcal{W}}_{\mathcal{U}}^{\beta,p}(\Omega)$. Then $\upsilon=\nabla^{r}\mu$ belongs to $\widetilde{\mathcal{W}}_{\mathcal{U}}^{\beta-r,p}(\Omega)$. We conclude that for every $\alpha<1$, $D^{r-1}\mu\in C_{K}^{0,\alpha}$ and then $\mu\in C_{K}^{\beta-n/p-1,\alpha}(\Omega)$.
\end{proof}

We now present the following embedding result which is nicely used for the continuous representation of tempered ultradistributions in $\widetilde{\mathcal{W}}_{\mathcal{U}}^{\beta,p}(\Omega)$.

\begin{Th}
\label{theo4.2}
Let $\Omega$ be a bounded domain in $\mathbb{C}$, $\beta\in (0,1)$ and $p\in (1,\infty)$. Assume that $p\beta>1$ and $|\text{Im}(\xi)|<\beta p$ for $\xi\in\Omega$ such that $\displaystyle\sup\Big((1+|\xi|^{p\beta})|\mu(\xi)|^{p}\Big)<\infty$ for $\mu\in \widetilde{\mathcal{W}}_{\mathcal{U}}^{\beta,p}(\Omega)$. If $\mu\in \widetilde{\mathcal{W}}_{\mathcal{U}}^{\beta,p}(\Omega)$, then the injection $\displaystyle \widetilde{\mathcal{W}}_{\mathcal{U}}^{\beta,p}(\Omega)\hookrightarrow C^{\beta-\frac{1}{p}}(\Omega)$ is compact.
\end{Th}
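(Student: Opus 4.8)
The plan is to prove the result in two stages: first establish the continuous (bounded) embedding $\widetilde{\mathcal{W}}_{\mathcal{U}}^{\beta,p}(\Omega)\hookrightarrow C^{\beta-\frac{1}{p}}(\Omega)$ through a Morrey-type pointwise H\"older estimate, and then upgrade continuity to compactness via the Arzel\`a--Ascoli theorem. Since $\Omega\subset\mathbb{C}$ we work with $n=1$, so that the hypothesis $p\beta>1$ is precisely the condition $n<p\beta$ which makes the target H\"older exponent $\beta-\tfrac{1}{p}$ positive.

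First I would derive the pointwise estimate
\[
|\mu(\xi)-\mu(\eta)|\leq M\,|\xi-\eta|^{\beta-\frac{1}{p}}\,[\mu]_{\widetilde{\mathcal{W}}_{\mathcal{U}}^{\beta,p}(\Omega)}.
\]
Fix $\xi,\eta$ and set $r=|\xi-\eta|$; let $B$ be the disk of radius $2r$ placed so as to contain both points, and let $\mu_{B}$ denote the average of $\mu$ over $B$. Estimating $|\mu(\xi)-\mu_{B}|$ and $|\mu(\eta)-\mu_{B}|$ by the mean of $|\mu(\xi)-\mu(z)|$ over $B$ and inserting the factor $|\xi-z|^{(1/p)+\beta}$, an application of H\"older's inequality with exponents $p$ and $p'=p/(p-1)$ splits the integral into the weighted Gagliardo quotient (controlled by $[\mu]_{\widetilde{\mathcal{W}}_{\mathcal{U}}^{\beta,p}(\Omega)}$, since the weight $(1+|\xi|^{n+p\beta})\geq 1$ dominates the unweighted integrand) and a pure power integral $\int_{B}|\xi-z|^{((1/p)+\beta)p'}\,dz$, which converges and scales like $r^{\beta-1/p}$ exactly because $p\beta>1$. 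Combining the two estimates with the triangle inequality yields the displayed H\"older bound. The boundedness hypothesis $\sup_{|\operatorname{Im}(\xi)|<p\beta}(1+|\xi|^{p\beta})|\mu(\xi)|^{p}<\infty$ together with condition (\ref{eqn2.6}) simultaneously controls $\Vert\mu\Vert_{L^{\infty}(\Omega)}$, so the full H\"older norm is dominated by $\Vert\mu\Vert_{\widetilde{\mathcal{W}}_{\mathcal{U}}^{\beta,p}(\Omega)}$; this is the continuous embedding.

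Next I would turn continuity into compactness. Let $\{\mu_{k}\}$ be a bounded sequence in $\widetilde{\mathcal{W}}_{\mathcal{U}}^{\beta,p}(\Omega)$. By the first step this sequence is uniformly bounded and uniformly H\"older continuous of exponent $\beta-\tfrac{1}{p}$ on the bounded set $\overline{\Omega}$, hence equicontinuous. The Arzel\`a--Ascoli theorem then yields a subsequence converging uniformly on $\overline{\Omega}$; passing the uniform H\"older bound to the limit places the limit in $C^{\beta-\frac{1}{p}}(\Omega)$, and interpolating the uniform convergence against the uniform seminorm bound gives convergence in the H\"older norm. This exhibits the injection as compact.

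I expect the main obstacle to be the first step, the Morrey-type pointwise estimate. The delicate points there are keeping track of the weight $(1+|\xi|^{n+p\beta})$ inside the Gagliardo quotient and verifying that the auxiliary power integral $\int_{B}|\xi-z|^{((1/p)+\beta)p'}\,dz$ both converges near $z=\xi$ and produces precisely the exponent $\beta-\tfrac{1}{p}$ under the scaling $r=|\xi-\eta|$; this is where the assumption $p\beta>1$ is genuinely used, and an incorrect bookkeeping of these exponents is the easiest way for the argument to fail.
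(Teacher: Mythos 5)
Your first step---the Morrey-type pointwise estimate giving the continuous embedding $\widetilde{\mathcal{W}}_{\mathcal{U}}^{\beta,p}(\Omega)\hookrightarrow C^{0,\beta-\frac{1}{p}}(\Omega)$ when $p\beta>1=n$---is sound, and it is considerably more of a proof than the paper supplies: the paper's entire argument is the single sentence that the proof is ``similar to'' that of Corollary \ref{cor3.9}, which is an induction on $[\beta]$ aimed at $L^{p'}$ embeddings and establishes neither H\"older continuity nor compactness. Since the weight $1+|\xi|^{n+p\beta}$ is bounded above and below by positive constants on the bounded set $\Omega$, your bookkeeping of the weight inside the Gagliardo quotient reduces to the classical unweighted computation, and the exponent arithmetic you describe (convergence of $\int_{B}|\xi-z|^{(\frac{1}{p}+\beta)p'}dz$ and the scaling $r^{\beta-1/p}$) is correct.

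The genuine gap is in the second step. Arzel\`a--Ascoli produces a uniformly convergent subsequence, and the interpolation inequality
\[
[\mu]_{C^{0,\alpha}}\leq [\mu]_{C^{0,\gamma}}^{\alpha/\gamma}\bigl(2\Vert\mu\Vert_{L^{\infty}}\bigr)^{1-\alpha/\gamma},\qquad \gamma=\beta-\tfrac{1}{p},
\]
upgrades uniform convergence to convergence in $C^{0,\alpha}$ only for $\alpha$ \emph{strictly less than} $\gamma$; at $\alpha=\gamma$ the exponent on $\Vert\mu\Vert_{L^{\infty}}$ vanishes and the inequality yields nothing. This is not a repairable technicality: the endpoint embedding is genuinely non-compact. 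Fix a smooth bump $\varphi$ and set $\mu_{k}(\xi)=k^{-\gamma}\varphi(k\xi)$, translated so as to be supported in $\Omega$. A change of variables shows that both the Gagliardo seminorm $[\mu_{k}]_{\widetilde{\mathcal{W}}_{\mathcal{U}}^{\beta,p}}$ (up to the harmless bounded weight) and the H\"older seminorm $[\mu_{k}]_{C^{0,\gamma}}$ are invariant under this rescaling, while $\Vert\mu_{k}\Vert_{L^{p}}$ and $\Vert\mu_{k}\Vert_{L^{\infty}}$ tend to $0$. Hence $\{\mu_{k}\}$ is bounded in $\widetilde{\mathcal{W}}_{\mathcal{U}}^{\beta,p}(\Omega)$, its only possible uniform limit is $0$, yet $[\mu_{k}]_{C^{0,\gamma}}$ stays bounded away from $0$, so no subsequence converges in $C^{0,\gamma}$. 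Your argument therefore proves compactness of $\widetilde{\mathcal{W}}_{\mathcal{U}}^{\beta,p}(\Omega)\hookrightarrow C^{0,\alpha}(\Omega)$ for every $\alpha<\beta-\frac{1}{p}$, and only continuity at $\alpha=\beta-\frac{1}{p}$; to obtain the theorem as stated one must either lower the target H\"older exponent or weaken ``compact'' to ``continuous''.
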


\begin{proof}
The proof of this theorem is similar to the proof presented in Corollary \ref{cor3.9}.
\end{proof}

\end{document}